\numberwithin{equation}{section}
\newtheorem{Theorem}{Theorem}[section]
\newtheorem{Corollary}[Theorem]{Corollary}
\newtheorem{Lemma}[Theorem]{Lemma}
\newtheorem{Proposition}[Theorem]{Proposition}
 { \theoremstyle{definition}
\newtheorem{Definition}[Theorem]{Definition} }
\begin{document}

\allowdisplaybreaks

\newcommand{\arXivNumber}{1612.06996}

\renewcommand{\PaperNumber}{055}

\FirstPageHeading

\ShortArticleName{Global Existence of Bi-Hamiltonian Structures on Orientable Three-Dimensional Manifolds}

\ArticleName{Global Existence of Bi-Hamiltonian Structures\\ on Orientable Three-Dimensional Manifolds}

\Author{Melike I\c{S}\.{I}M EFE and Ender ABADO\u{G}LU}
\AuthorNameForHeading{M.~I\c{s}im Efe and E.~Abado\u{g}lu}
\Address{Yeditepe University, Mathematics Department, \.{I}n\.{o}n\.{u} Mah. Kay{\i}\c{s}da\u{g}{\i} Cad.~326A,\\
 26 A\u{g}ustos Yerle\c{s}imi, 34755 Ata\c{s}ehir \.{I}stanbul, Turkey}
\Email{\href{mailto:melike.efe@yeditepe.edu.tr}{melike.efe@yeditepe.edu.tr}, \href{mailto:eabadoglu@yeditepe.edu.tr}{eabadoglu@yeditepe.edu.tr}}

\ArticleDates{Received December 21, 2016, in f\/inal form July 04, 2017; Published online July 14, 2017}

\Abstract{In this work, we show that an autonomous dynamical system def\/ined by a nonvanishing vector f\/ield on an orientable three-dimensional manifold is globally bi-Hamiltonian if and only if the f\/irst Chern class of the normal bundle of the given vector f\/ield vanishes. Furthermore, the bi-Hamiltonian structure is globally compatible if and only if the Bott class of the complex codimension one foliation def\/ined by the given vector f\/ield vanishes.}

\Keywords{bi-Hamiltonian systems; Chern class; Bott class}

\Classification{53D17; 53D35}

\rightline{\textit{Dedicated to the memory of Ali Yavuz.}}

\section{Introduction}

An autonomous dynamical system on a manifold $M$
\begin{gather}
\dot{x}(t) =v(x(t))\label{e1}
\end{gather}
is determined by a vector f\/ield $v(x) $ on a manifold up to time reparametrization. Important geometric quantities related to a dynamical system are functions~$I$ which are invariant under the f\/low of the vector f\/ield
\begin{gather*}
\mathcal{L}_{v}I=0.
\end{gather*}
It is sometimes possible to relate the vector f\/ield to an invariant function via a Poisson structure~$\mathcal{J},$ which is a bivector f\/ield on~$M$
\begin{gather*}
\mathcal{J}\colon \ \Lambda ^{1}(M)\rightarrow \mathfrak{X}(M)
\end{gather*}
satisfying the Jacobi identity condition
\begin{gather*}
 [ \mathcal{J},\mathcal{J} ] _{\rm SN}=0,
\end{gather*}
where $[\, ,\,] _{\rm SN}$ is the Schouten--Nijenhuis bracket. The local structure of such manifolds was f\/irst introduced in~\cite{Wein}. The invariants satisfying the condition
\begin{gather}
v=\mathcal{J}({\rm d}I) \label{e5}
\end{gather}
are called Hamiltonian functions of (\ref{e1}). Given a~dynamical system on $M$ def\/ined by the vector f\/ield $v$, the vector f\/ield~$v$ is called a~Hamiltonian vector f\/ield if there exists a Poisson bivector~$\mathcal{J}$ and a smooth function $I$ such that equation~(\ref{e5}) holds.

\looseness=-1 Given a vector f\/ield $v$ on $M$, f\/inding a Poisson structure according to which the vector f\/ield becomes Hamiltonian may not be an easy task in general. However, if a given dynamical system can be put into Hamiltonian form then, there may be more than one Poisson structure which makes it into a Hamiltonian system. In~\cite{Magri}, a bi-Hamiltonian system is introduced for the analysis of certain inf\/inite-dimensional soliton equations. In such a case, there arises the question of the relation between these Poisson structures, which is called compatibility. Although there are at least two dif\/ferent approaches to compatibility~\cite{Santo}, by following~\cite{Olver} we adapt the def\/initions below:

\begin{Definition}\label{definition1}
A dynamical system is called bi-Hamiltonian if it can be written in Hamiltonian form in two distinct ways:
\begin{gather}
v=\mathcal{J}_{1}( {\rm d}H_{2}) =\mathcal{J}_{2}( {\rm d}H_{1}),\label{e6}
\end{gather}
such that $\mathcal{J}_{1}$ and $\mathcal{J}_{2}$ are nowhere multiples of each other. This bi-Hamiltonian structure is compatible if $\mathcal{J}_{1}+\mathcal{J}_{2}$ is also a Poisson structure.
\end{Definition}

In this paper we conf\/ine ourselves to dynamical systems on three-dimensional orientable manifolds. For three-dimensional manifolds, where there is no symplectic structure for dimensional reasons, Poisson structures have a simple form. Poisson structures of dynamical systems on three manifolds are extensively studied f\/irst in~\cite{Nutku} and then also in~\cite{Haas} and \cite{Gengoux}. Following the def\/initions in \cite{Nutku}, choosing any Riemannian metric~$\boldsymbol{g}$ on~$M$, a Poisson bivector f\/ield, which is skew-symmetric, can be associated to a vector f\/ield by using the Lie algebra isomorphism $\mathfrak{so}(3) \simeq \mathbb{R}^{3}$
\begin{gather*}
\mathcal{J}=\mathcal{J}^{mn}e_{m}\wedge e_{n}=\varepsilon _{k}^{mn}J^{k}e_{m}\wedge e_{n},
\end{gather*}
and the vector f\/ield
\begin{gather*}
J=J^{k}e_{k}
\end{gather*}
is called the Poisson vector f\/ield on~$M$.

Then, the Jacobi identity has the form
\begin{gather}
J\cdot ( \nabla \times J) =0, \label{e9}
\end{gather}
and equation (\ref{e6}) becomes
\begin{gather}
v=J_{1}\times \nabla H_{2}=J_{2}\times \nabla H_{1}. \label{e10}
\end{gather}
Since $J_{1}$ and $J_{2}$ are not multiples of each other by def\/inition, we have
\begin{gather}
J_{1}\times J_{2}\neq 0 \label{e11}
\end{gather}
and
\begin{gather}
J_{i}\cdot v=0 \label{e12}
\end{gather}
for $i=1,2$.

This work is focused on the bi-Hamiltonian structure of dynamical systems def\/ined by non\-va\-nishing vector f\/ields on orientable three-dimensional manifolds, or equivalently vector f\/ields on three-dimensional manifolds whose supports are orientable three-dimensional manifolds. Since all orientable three-dimensional manifolds are parallelizable~\cite{Steifel}, there is no topological obstruction to the global existence of a nonvanishing vector f\/ield. Then, by the bi-Hamiltonian form~(\ref{e10})--(\ref{e12}), $\{ v,J_{1},J_{2}\} $ forms a local frame f\/ield. Therefore, whenever the system is globally bi-Hamiltonian, $\{v,J_{1},J_{2}\} $ becomes a global frame f\/ield on~$M$. For example, for $M=\mathbb{R}^{3}$ and $v=\partial _{x^{0}}$ we have $J_{i}=\partial _{x^{i}}$ and $\{ \partial _{x^{0}},\partial _{x^{1}},\partial _{x^{2}}\} $ forms such a global frame f\/ield. However, the global existence of the frame f\/ield $\{ v,J_{1},J_{2}\} $ is by no means guaranteed. The simplest counterexample is the gradient f\/low of the $S^{2}$ in $\mathbb{R}^{3}\setminus \{0\} $. Here, a frame f\/ield $\{ v,J_{1},J_{2}\}$ cannot be def\/ined globally since $J_{1}$, $J_{2}$ are sections of the tangent bundle of $S^{2}$ which is not trivial and does not admit two nonvanishing linearly independent vector f\/ields.

The goal of this paper is to give necessary and suf\/f\/icient conditions for a~nonvanishing vector f\/ield on an orientable three-dimensional manifold to admit a compatible bi-Hamiltonian structure. The paper is organized as follows: In Section~\ref{section2}, the local existence of bi-Hamiltonian systems is investigated in a neighbourhood of a point, possibly ref\/ined by the existence conditions of solutions of certain ODE's related with the problem, and it is shown in Theorem~\ref{theorem1} that it is always possible to f\/ind a pair of compatible Poisson structures such that the system def\/ined by the nonvanishing vector f\/ield becomes bi-Hamiltonian. In Section~\ref{section3}, obstructions to the global existence of a pair of Poisson structures are studied. In Section~\ref{section3.2} the primary obstruction for the existence of a global pair of Poisson structures is investigated, and it is shown in Theorem~\ref{theorem3} that such a pair, which is not necessarily compatible, exists if and only if the f\/irst Chern class of the normal bundle vanishes. Finally, the global compatibility of this pair is investigated in Section~\ref{section3.3} and it is shown in Theorem~\ref{theorem4} that under the assumption of global existence, the vanishing of the Bott class of the complex codimension one foliation is the necessary and suf\/f\/icient condition for the global compatibility of the pair of Poisson structures.

Throughout the work, bivectors are denoted by calligraphic and forms are denoted by bold letters.

\section{Local existence of bi-Hamiltonian structure in 3D}\label{section2}

For this purpose, we will f\/irst analyze the local solutions of the equation~(\ref{e9}) def\/ining Poisson vectors, which is also studied in~\cite{Bermejo}. Let $M$ be an orientable three-dimensional manifold with an arbitrary Riemannian metric~$\boldsymbol{g}$, and~$v$ be a~nonvanishing vector f\/ield. Let
\begin{gather*}
\widehat{e}_{1}=\frac{v}{\Vert v \Vert }
\end{gather*}
and extend this vector f\/ield to a local orthonormal frame f\/ield $\{\widehat{e}_{1},\widehat{e}_{2},\widehat{e}_{3}\} $. Def\/ine the structure functions $(C_{ij}^{k}(x)) $ via the relation
\begin{gather}
[ \widehat{e}_{i},\widehat{e}_{j}] =C_{ij}^{k}(x) \widehat{e}_{k}. \label{e14}
\end{gather}

\begin{Proposition}\label{proposition1}
A nonvanishing vector field $v$ admits two independent local Poisson structures on~$M$.
\end{Proposition}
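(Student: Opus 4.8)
The plan is to construct two independent Poisson vector fields locally by solving the Jacobi identity equation (\ref{e9}). Given the orthonormal frame $\{\widehat{e}_{1},\widehat{e}_{2},\widehat{e}_{3}\}$ with $\widehat{e}_{1}=v/\Vert v\Vert$, I would look for Poisson vectors $J$ in the span of $\widehat{e}_{2}$ and $\widehat{e}_{3}$, since the compatibility constraint (\ref{e12}) requires $J\cdot v=0$, forcing $J$ to be orthogonal to $\widehat{e}_{1}$. Writing $J=a\,\widehat{e}_{2}+b\,\widehat{e}_{3}$ for unknown functions $a,b$, the Jacobi condition $J\cdot(\nabla\times J)=0$ becomes a single first-order partial differential equation in $a$ and $b$ involving the structure functions $C_{ij}^{k}$ defined in (\ref{e14}).

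The key step is to show that this scalar PDE always admits local solutions, and in fact enough independent ones to produce two Poisson vectors $J_{1}$, $J_{2}$ that are not multiples of each other. First I would expand $\nabla\times J$ in the frame using the structure equations (\ref{e14}), so that $J\cdot(\nabla\times J)$ is expressed as a linear combination of $a$, $b$, their directional derivatives $\widehat{e}_{i}(a)$, $\widehat{e}_{i}(b)$, and the $C_{ij}^{k}$. The resulting equation is a single first-order quasilinear (indeed linear in the derivatives) PDE for the pair $(a,b)$; by the method of characteristics, or equivalently by the Cauchy--Kovalevskaya / Frobenius-type local solvability of such equations, it has local solutions depending on functional parameters. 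Because the equation imposes only one scalar constraint on two unknown functions, the solution space is large enough to select two solutions whose values are pointwise linearly independent in a neighbourhood.

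To guarantee independence I would prescribe Cauchy data on a hypersurface transverse to the characteristic direction: choose initial data $(a_{1},b_{1})$ and $(a_{2},b_{2})$ that are linearly independent as $\mathbb{R}^{2}$-valued functions at the base point, and solve the PDE separately for each. By continuity the resulting Poisson vectors $J_{1}=a_{1}\widehat{e}_{2}+b_{1}\widehat{e}_{3}$ and $J_{2}=a_{2}\widehat{e}_{2}+b_{2}\widehat{e}_{3}$ satisfy $J_{1}\times J_{2}\neq 0$ in a possibly smaller neighbourhood, giving exactly condition (\ref{e11}). Each $J_{i}$ solves the Jacobi identity by construction, so each defines a genuine local Poisson structure, and both annihilate $v$, establishing the two independent local Poisson structures.

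The main obstacle I anticipate is the explicit reduction of $J\cdot(\nabla\times J)=0$ to a tractable scalar PDE: carefully computing $\nabla\times J$ in a non-coordinate orthonormal frame requires tracking the structure functions $C_{ij}^{k}$ correctly, and one must verify that the characteristic system is nondegenerate so that the method of characteristics applies and the independence of the two solutions is preserved on a common neighbourhood. If the coefficient of the highest-order terms degenerates along $\widehat{e}_{1}$, the characteristics align with the flow of $v$, and I would need to check that the transverse initial-value problem remains well posed.
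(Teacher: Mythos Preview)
Your approach is essentially the paper's: both take $J=\alpha\,\widehat{e}_{2}+\beta\,\widehat{e}_{3}$ and reduce the Jacobi identity~(\ref{e9}) to a single scalar first-order equation. The paper actually carries out the computation you outline and finds that only the $\widehat{e}_{1}$-derivatives of $\alpha$ and $\beta$ survive (equation~(\ref{e17})), so the equation is \emph{already} an ODE along the integral curves of $v$; the situation you flag at the end---characteristics aligning with the flow of $v$---is not a degeneracy to worry about but the generic structure, and the transverse Cauchy problem is automatically well posed. The one refinement the paper adds, and which you do not have, is to divide by $\alpha^{2}$ and set $\mu=\beta/\alpha$, turning the equation into a Riccati ODE~(\ref{e21}) in the single unknown $\mu$ along each flow line; the two independent Poisson vectors then arise from the two independent solutions of this Riccati equation (equivalently, of the associated linear second-order ODE), with the scalings $\alpha_{i}$ left arbitrary. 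This Riccati reduction is not strictly needed to prove the proposition---your freer ``two unknowns, one constraint'' argument also works---but it is the explicit form used throughout the rest of the paper (Propositions~\ref{proposition2}--\ref{proposition4} and Lemma~\ref{lemma3}), so it is worth isolating.
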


\begin{proof}
Adopting the frame def\/ined above and using (\ref{e12}), we have the Poisson vector f\/ield
\begin{gather}
J=\alpha \widehat{e}_{2}+\beta \widehat{e}_{3}, \label{e15}
\end{gather}
and its curl is given by
\begin{gather}
\nabla \times J=\nabla \alpha \times \widehat{e}_{2}+\alpha \nabla \times \widehat{e}_{2}+\nabla \beta \times \widehat{e}_{3}+\beta \nabla \times \widehat{e}_{3}. \label{e16}
\end{gather}

Now the Jacobi identity (\ref{e9}) is obtained by taking the dot product of (\ref{e15}) with (\ref{e16}), and using triple vector product identities we get
\begin{gather}
\beta \widehat{e}_{1}\cdot \nabla \alpha -\alpha \widehat{e}_{1}\cdot \nabla\beta -\alpha ^{2}C_{31}^{2}-\alpha \beta \big(C_{31}^{3}+C_{12}^{2}\big) -\beta ^{2}C_{12}^{3}=0. \label{e17}
\end{gather}

If $J=0$ then $\Vert v\Vert =0$ and hence $v=0$, which contradicts with our assumption that the vector f\/ield is nonvanishing. Therefore, we assume
\begin{gather*}
J\neq 0,
\end{gather*}
which means that $\alpha \neq 0$ or $\beta \neq 0$. Now we assume $\alpha \neq 0$, while the case $\beta \neq 0$ is similar and amounts to a~rotation of the frame f\/ields. Def\/ining
\begin{gather*}
\mu =\frac{\beta }{\alpha }
\end{gather*}
and dividing (\ref{e17}) by $\alpha ^{2}$, we get
\begin{gather}
\widehat{e}_{1}\cdot \nabla \mu =-C_{31}^{2}-\mu \big( C_{31}^{3}+C_{12}^{2}\big) -\mu ^{2}C_{12}^{3}, \label{e20}
\end{gather}
whose characteristic curve is the integral curve of (\ref{e1}) in arclength parametrization and
\begin{gather}
\frac{{\rm d}\mu }{{\rm d}s}=-C_{31}^{2}-\mu \big( C_{31}^{3}+C_{12}^{2}\big) -\mu ^{2}C_{12}^{3} \label{e21}
\end{gather}
in the arclength variable~$s$. The Riccati equation (\ref{e21}) is equivalent to a linear second order equation and hence, possesses two linearly independent solutions leading to two Poisson vector f\/ields. Since the vector f\/ield $v$ is assumed to be nonvanishing, for each $\boldsymbol{x}_{0}\in \mathbb{R}^{3}$ it is possible to f\/ind a~neighborhood foliated by the integral curves of $v$ which are nothing but characteristic curves of (\ref{e20}). Therefore, solutions of~(\ref{e21}) can be extended to a possibly smaller neighborhood on which the
Riccati equation has two independent solutions which we call $\mu _{i}$ for $i=1,2$. Hence, we have two Poisson vector f\/ields
\begin{gather}
J_{i}=\alpha _{i} \big( \widehat{e}_{2}+\mu _{i}\widehat{e}_{3} \big), \label{e22}
\end{gather}
where the coef\/f\/icients $\alpha _{i}$ are arbitrary.
\end{proof}

Note that, (\ref{e20}) determines $\mu _{i}$ alone, but not $\alpha _{i}$. Taking the advantage of the freedom of choosing arbitrary scaling factors we may restrict these factors by imposing compatibility on our Poisson vector f\/ields.

\begin{Proposition}\label{proposition2}
Two Poisson structures obtained in \eqref{e20} are compatible iff
\begin{gather}
\widehat{e}_{1}\cdot \nabla \ln \frac{\alpha _{i}}{\alpha _{j}}=C_{12}^{3} ( \mu _{i}-\mu _{j} ). \label{e23}
\end{gather}
\end{Proposition}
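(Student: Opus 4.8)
The plan is to translate the compatibility requirement of Definition~\ref{definition1} into the vector formulation of~\eqref{e9} and to exploit that each $\mathcal{J}_i$ is \emph{already} a Poisson structure. Compatibility asks that $\mathcal{J}_i+\mathcal{J}_j$ be Poisson, which by~\eqref{e9} reads
\begin{gather*}
(J_i+J_j)\cdot\big(\nabla\times(J_i+J_j)\big)=0 .
\end{gather*}
Expanding and using $J_i\cdot(\nabla\times J_i)=J_j\cdot(\nabla\times J_j)=0$, compatibility is equivalent to the vanishing of the mixed term
\begin{gather*}
J_i\cdot(\nabla\times J_j)+J_j\cdot(\nabla\times J_i)=0 .
\end{gather*}
This expression is exactly the symmetric polarization of the quadratic form whose vanishing produced~\eqref{e17}. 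Rather than recompute all curls and triple products, I would read off the bilinear version of~\eqref{e17} in the pairs $(\alpha_i,\beta_i)$ and $(\alpha_j,\beta_j)$, where I set $\beta_i=\alpha_i\mu_i$ in accordance with~\eqref{e22}.

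Writing $D:=\widehat{e}_1\cdot\nabla$ for the arclength derivative along $v$, the polarized form of~\eqref{e17} is
\begin{gather*}
(\beta_i D\alpha_j+\beta_j D\alpha_i)-(\alpha_i D\beta_j+\alpha_j D\beta_i)\\
-2\alpha_i\alpha_j C_{31}^2-(\alpha_i\beta_j+\alpha_j\beta_i)\big(C_{31}^3+C_{12}^2\big)-2\beta_i\beta_j C_{12}^3=0 .
\end{gather*}
Next I would substitute $\beta_i=\alpha_i\mu_i$ and $D\beta_i=\mu_i D\alpha_i+\alpha_i D\mu_i$. The derivative block collapses to $(\mu_i-\mu_j)(\alpha_i D\alpha_j-\alpha_j D\alpha_i)-\alpha_i\alpha_j(D\mu_i+D\mu_j)$, while the algebraic block becomes $-2\alpha_i\alpha_j C_{31}^2-\alpha_i\alpha_j(\mu_i+\mu_j)\big(C_{31}^3+C_{12}^2\big)-2\alpha_i\alpha_j\mu_i\mu_j C_{12}^3$.

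The decisive step is to eliminate $D\mu_i$ and $D\mu_j$ through the Riccati equation~\eqref{e21}, which both $\mu_i$ and $\mu_j$ satisfy by Proposition~\ref{proposition1}. Inserting $D\mu_i+D\mu_j=-2C_{31}^2-(\mu_i+\mu_j)\big(C_{31}^3+C_{12}^2\big)-(\mu_i^2+\mu_j^2)C_{12}^3$ into $-\alpha_i\alpha_j(D\mu_i+D\mu_j)$ and adding the algebraic block, the $C_{31}^2$ contributions and the $C_{31}^3+C_{12}^2$ contributions cancel identically, and the $C_{12}^3$ terms combine via $\mu_i^2+\mu_j^2-2\mu_i\mu_j=(\mu_i-\mu_j)^2$. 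What survives is
\begin{gather*}
(\mu_i-\mu_j)\big[(\alpha_i D\alpha_j-\alpha_j D\alpha_i)+\alpha_i\alpha_j(\mu_i-\mu_j)C_{12}^3\big]=0 .
\end{gather*}
Because $J_1$ and $J_2$ are nowhere multiples of each other, \eqref{e11} forces $\mu_i\neq\mu_j$, so I divide by $\mu_i-\mu_j$ and by $\alpha_i\alpha_j$ and rewrite $(D\alpha_i)/\alpha_i-(D\alpha_j)/\alpha_j=D\ln(\alpha_i/\alpha_j)$, which yields precisely~\eqref{e23}.

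The main obstacle is the careful bookkeeping that produces the correct polarized form of~\eqref{e17} — matching every derivative and structure-function term of the symmetric bilinear extension — together with verifying that the $C_{31}^2$ and $C_{31}^3+C_{12}^2$ contributions genuinely cancel once the Riccati relation is substituted. Once that cancellation is in hand, the remaining quadratic in $\mu$ is a perfect square and the argument collapses cleanly to the stated criterion.
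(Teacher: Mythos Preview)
Your argument is correct and follows essentially the same route as the paper: expand $(J_i+J_j)\cdot\nabla\times(J_i+J_j)$, drop the diagonal terms by the Jacobi identity, and reduce the cross term using the Riccati relation for $\mu_i,\mu_j$. The only organizational difference is that the paper computes each mixed term separately, obtaining the closed formula $(\nabla\times J_i)\cdot J_j=\alpha_i\alpha_j(\mu_i-\mu_j)\big(C_{12}^{2}+C_{12}^{3}\mu_i-\widehat{e}_1\cdot\nabla\ln\alpha_i\big)$ before summing, whereas you polarize the scalar identity~\eqref{e17} and cancel with $D\mu_i+D\mu_j$; the algebra is the same either way. The paper's intermediate formula is reused later (e.g., in Lemma~\ref{lemma2}), which is the only practical payoff of its packaging.
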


\begin{proof} Let
\begin{gather*}
J=J_{1}+J_{2}
\end{gather*}
Using (\ref{e9}) for $J_{1}$, $J_{2}$ and $J$
\begin{gather}
( \nabla \times J) \cdot J=( \nabla \times J_{2})\cdot J_{1}+ ( \nabla \times J_{1} ) \cdot J_{2}=0. \label{e25}
\end{gather}
For the Poisson vector f\/ields def\/ined in (\ref{e20}), taking the dot product of both sides of~(\ref{e16}) by~$J_{j}$, leads to
\begin{gather}
( \nabla \times J_{i}) \cdot J_{j}=\alpha _{i}\alpha_{j} (\mu _{i}-\mu _{j} ) \big( C_{12}^{2}+C_{12}^{3}\mu _{i}-\widehat{e}_{1}\cdot \nabla \ln \alpha _{i}\big). \label{e27}
\end{gather}
Therefore, the compatibility condition (\ref{e25}) implies that
\begin{gather*}
C_{12}^{2}+C_{12}^{3}\mu _{i}-\widehat{e}_{1}\cdot \nabla \ln \alpha_{i}=C_{12}^{2}+C_{12}^{3}\mu _{j}-\widehat{e}_{1}\cdot \nabla \ln \alpha_{j},
\end{gather*}
and hence, we get
\begin{gather}
\widehat{e}_{1}\cdot \nabla \ln \frac{\alpha _{i}}{\alpha _{j}}=C_{12}^{3}( \mu _{i}-\mu _{j}), \label{e29}
\end{gather}
whose characteristic curve is the solution curve of (\ref{e1}) in arclength parametrization
\begin{gather}
\frac{{\rm d}}{{\rm d}s}\ln \frac{\alpha _{i}}{\alpha _{j}}=C_{12}^{3} ( \mu_{i}-\mu _{j} ). \label{e30}
\end{gather}
By a similar line of reasoning as above, the solutions of (\ref{e30}) can also be extended to the whole neighborhood, and the proposition follows.
\end{proof}

However, having a pair of Poisson structures obtained in (\ref{e20}) and even a compatible pair satisfying (\ref{e29}) do not guarantee the existence of Hamiltonian functions even locally.

\begin{Proposition}\label{proposition3}
The dynamical system \eqref{e1} is locally bi-Hamiltonian with the pair of Poisson structures obtained in~\eqref{e22} if and only if
\begin{gather}
\widehat{e}_{1}\cdot \nabla \ln \frac{\alpha _{i}}{\Vert v\Vert }=C_{31}^{3}+\mu _{i}C_{12}^{3}. \label{e31}
\end{gather}
\end{Proposition}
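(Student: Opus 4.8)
The plan is to reduce the bi-Hamiltonian requirement (\ref{e10}) to an overdetermined first-order system for the Hamiltonian functions, and then to extract (\ref{e31}) as its single integrability condition. Writing $v=\Vert v\Vert \widehat{e}_{1}$ and $J_{i}=\alpha_{i}(\widehat{e}_{2}+\mu_{i}\widehat{e}_{3})$ as in (\ref{e22}), I would expand $v=J_{i}\times \nabla H$ in the orthonormal frame using the elementary cross-product relations for $\{\widehat{e}_{1},\widehat{e}_{2},\widehat{e}_{3}\}$. Matching the three components gives, since $\alpha_{i}\neq 0$, the two scalar equations
\begin{gather*}
\widehat{e}_{1}\cdot \nabla H=0,\qquad \widehat{e}_{3}\cdot \nabla H-\mu_{i}\,\widehat{e}_{2}\cdot \nabla H=\frac{\Vert v\Vert }{\alpha_{i}}.
\end{gather*}
The first says that $H$ is a first integral of $v$, while the second prescribes its transverse derivative. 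Thus the system is locally bi-Hamiltonian with the pair (\ref{e22}) exactly when, for $i=1,2$, this pair of equations admits a local solution $H$.

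To analyse solvability I would set $X=\widehat{e}_{1}$ and $Y=\widehat{e}_{3}-\mu_{i}\widehat{e}_{2}$, so that the system reads $XH=0$, $YH=\Vert v\Vert /\alpha_{i}$. The key computation is the commutator $[X,Y]$, evaluated from the structure relations (\ref{e14}): expanding $[\widehat{e}_{1},\widehat{e}_{3}-\mu_{i}\widehat{e}_{2}]$ produces the term $(\widehat{e}_{1}\cdot \nabla \mu_{i})\widehat{e}_{2}$, which I would eliminate using the Riccati equation (\ref{e21}) satisfied by $\mu_{i}$. I expect the $\widehat{e}_{2}$- and $\widehat{e}_{3}$-coefficients then to combine into a multiple of $Y$ itself, giving
\begin{gather*}
[X,Y]=\big(\!-C_{31}^{1}-\mu_{i}C_{12}^{1}\big)X-\big(C_{31}^{3}+\mu_{i}C_{12}^{3}\big)Y,
\end{gather*}
so that the distribution spanned by $X$ and $Y$ is involutive. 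Applying this identity to a putative solution $H$ and using $XH=0$, $YH=\Vert v\Vert /\alpha_{i}$ yields, on the one hand, $[X,Y]H=X(YH)=\widehat{e}_{1}\cdot \nabla (\Vert v\Vert /\alpha_{i})$, and on the other hand $[X,Y]H=-\big(C_{31}^{3}+\mu_{i}C_{12}^{3}\big)\Vert v\Vert /\alpha_{i}$. Equating the two, dividing by $\Vert v\Vert /\alpha_{i}$, and rewriting $\ln(\Vert v\Vert /\alpha_{i})=-\ln(\alpha_{i}/\Vert v\Vert )$ gives precisely (\ref{e31}); this establishes necessity.

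For sufficiency I would run the argument in reverse. Because $[X,Y]$ lies in $\mathrm{span}(X,Y)$, the Frobenius theorem provides a local two-dimensional foliation tangent to this distribution, on each leaf of which $\{X,Y\}$ is a frame. Writing $\theta^{X},\theta^{Y}$ for the dual coframe and using the dual form of the bracket identity, $d\theta^{Y}=\big(C_{31}^{3}+\mu_{i}C_{12}^{3}\big)\,\theta^{X}\wedge \theta^{Y}$, the candidate differential $\big(\Vert v\Vert /\alpha_{i}\big)\theta^{Y}$ is closed along the leaf if and only if (\ref{e31}) holds there. Hence, assuming (\ref{e31}), this $1$-form integrates on each leaf to a function $H$ solving $XH=0$ and $YH=\Vert v\Vert /\alpha_{i}$; smooth dependence of the solutions on the transverse coordinate patches these into a smooth local Hamiltonian, and repeating the construction for $i=1,2$ produces the two Hamiltonians $H_{1},H_{2}$ of (\ref{e10}).

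The step I expect to be most delicate is the bracket computation: it is only the fact that $\mu_{i}$ solves the Riccati equation (\ref{e21}) that forces $[X,Y]$ to close on $\mathrm{span}(X,Y)$ with the specific coefficient $C_{31}^{3}+\mu_{i}C_{12}^{3}$, which is exactly what makes the full integrability analysis collapse to the single scalar equation (\ref{e31}) rather than an unwieldy overdetermined system. A secondary point requiring care is the smooth, rather than merely leafwise, dependence of $H$ on the transverse coordinate, which I would obtain from smooth dependence of solutions of linear ODEs on parameters.
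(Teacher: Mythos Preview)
Your proposal is correct and follows essentially the same route as the paper: both derive the two scalar equations $\widehat{e}_{1}\cdot\nabla H=0$ and $(\widehat{e}_{3}-\mu_{i}\widehat{e}_{2})\cdot\nabla H=\Vert v\Vert/\alpha_{i}$ from (\ref{e10}), compute the bracket $[\widehat{e}_{1},\widehat{e}_{3}-\mu_{i}\widehat{e}_{2}]$ using (\ref{e14}) and the Riccati equation (\ref{e20}) to obtain exactly the closure formula (\ref{e40}), and then read off (\ref{e31}) as the single integrability condition. The only difference is that you are more explicit about the sufficiency direction, invoking Frobenius and the closedness of $(\Vert v\Vert/\alpha_{i})\theta^{Y}$ on leaves, whereas the paper leaves this implicit once the integrability condition has been isolated.
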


\begin{proof}
For this purpose we f\/irst need to write down the equations for the Hamiltonian functions. The invariance condition of Hamiltonian functions under the f\/low generated by $v$ implies
\begin{gather}
\widehat{e}_{1}\cdot \nabla H_{i}=0, \label{e32}
\end{gather}
so the gradients of Hamiltonian functions are linear combinations of~$\widehat{e}_{2}$ and~$\widehat{e}_{3}$. Then, inser\-ting~(\ref{e22}) into~(\ref{e10}) we get another condition
\begin{gather}
\widehat{e}_{3}\cdot \nabla H_{j}-\mu _{i}\widehat{e}_{2}\cdot \nabla H_{j}=\frac{ \Vert v \Vert }{\alpha _{i}} \label{e35}
\end{gather}
or by def\/ining
\begin{gather*}
u_{i}=-\mu _{i}\widehat{e}_{2}+\widehat{e}_{3}
\end{gather*}
(\ref{e35}) can be written as
\begin{gather}
u_{i}\cdot \nabla H_{j}=\frac{\Vert v\Vert }{\alpha _{i}}.\label{e37}
\end{gather}
Equations (\ref{e32}) and (\ref{e37}) for Hamiltonian functions are subject to the integrability condition
\begin{gather*}
\widehat{e}_{1} ( u_{i}(H_{j}) ) -u_{i}\big( \widehat{e}_{1}(H_{j}) \big) =\big[ \widehat{e}_{1},u_{i}\big](H_{j}).
\end{gather*}
Using the commutation relations given in (\ref{e14}) and (\ref{e20}), we obtain
\begin{gather}
[ \widehat{e}_{1},u_{i}] =-\big( C_{31}^{1}+\mu_{i}C_{12}^{1}\big) \widehat{e}_{1}-\big(C_{31}^{3}+\mu_{i}C_{12}^{3}\big) u_{i}. \label{e40}
\end{gather}
Applying $H_{j}$ to both sides of (\ref{e40}) and using two equations (\ref{e32}) and (\ref{e37}) for Hamiltonian functions, we get
\begin{gather*}
\big[ \widehat{e}_{1},u_{i}\big] (H_{j}) =-\big(C_{31}^{3}+\mu _{i}C_{12}^{3}\big) \frac{\Vert v\Vert }{\alpha_{i}}. 
\end{gather*}
Therefore, our integrability condition for Hamiltonian functions becomes
\begin{gather*}
\widehat{e}_{1}\cdot \nabla \left( \frac{\Vert v\Vert }{\alpha_{i}}\right) =-\big( C_{31}^{3}+\mu _{i}C_{12}^{3}\big) \frac{\Vert v\Vert }{\alpha _{i}},
\end{gather*}
hence,
\begin{gather}
\widehat{e}_{1}\cdot \nabla \ln \left( \frac{\alpha _{i}}{\Vert v\Vert }\right) =\mu _{i}C_{12}^{3}+C_{31}^{3} \label{e43}
\end{gather}
and the proposition follows.
\end{proof}

\begin{Corollary}\label{corollary1}
The pair of Poisson structures $J_{i}=\alpha _{i}\big( \widehat{e}_{2}+\mu_{i}\widehat{e}_{3}\big) $ where $\alpha _{i}$'s are defined by~\eqref{e43} and $\mu _{i}$'s are defined by~\eqref{e20} are compatible.
\end{Corollary}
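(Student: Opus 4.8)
The plan is to reduce the claim to the compatibility criterion already established in Proposition~\ref{proposition2}. That proposition tells us that the two Poisson structures $J_{i}$ and $J_{j}$ are compatible precisely when their scaling factors satisfy~\eqref{e23}, namely $\widehat{e}_{1}\cdot \nabla \ln(\alpha_{i}/\alpha_{j}) = C_{12}^{3}(\mu_{i}-\mu_{j})$. Since the $\mu_{i}$ are fixed by the Riccati equation~\eqref{e20}, the only thing left to verify is that the particular choice of $\alpha_{i}$ dictated by~\eqref{e43} forces this identity.

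To do this I would simply write~\eqref{e43} once for the index $i$ and once for the index $j$,
\begin{gather*}
\widehat{e}_{1}\cdot \nabla \ln \left( \frac{\alpha_{i}}{\Vert v\Vert}\right) =\mu_{i}C_{12}^{3}+C_{31}^{3},\qquad \widehat{e}_{1}\cdot \nabla \ln \left( \frac{\alpha_{j}}{\Vert v\Vert}\right) =\mu_{j}C_{12}^{3}+C_{31}^{3},
\end{gather*}
and subtract the second from the first. On the right-hand side the terms $C_{31}^{3}$ cancel, leaving $(\mu_{i}-\mu_{j})C_{12}^{3}$. On the left-hand side, since $\widehat{e}_{1}\cdot \nabla$ is a derivation and $\ln(\alpha_{i}/\Vert v\Vert)-\ln(\alpha_{j}/\Vert v\Vert)=\ln(\alpha_{i}/\alpha_{j})$, the common factor $\ln \Vert v\Vert$ drops out, yielding exactly $\widehat{e}_{1}\cdot \nabla \ln(\alpha_{i}/\alpha_{j})$. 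We thereby recover~\eqref{e23} verbatim, and Proposition~\ref{proposition2} gives compatibility.

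There is essentially no obstacle here: the corollary is arranged so that the $\Vert v\Vert$-normalization and the inhomogeneous term $C_{31}^{3}$ appearing in~\eqref{e43} are both index-independent, and hence cancel upon subtraction, isolating precisely the term $\mu_{i}C_{12}^{3}$ that feeds the compatibility condition. The only point worth a moment's care is that the characteristic-curve extension argument already invoked in the proofs of Propositions~\ref{proposition1} and~\ref{proposition2}---namely that the integral curves of $v$ foliate a suitable neighborhood---guarantees a common neighborhood on which $\mu_{i}$, the $\alpha_{i}$ solving~\eqref{e43}, and therefore the compatibility relation all hold simultaneously. Since that was secured above, the corollary follows at once.
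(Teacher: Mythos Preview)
Your proposal is correct and follows exactly the paper's own argument: write~\eqref{e43} for both indices, subtract, and observe that the resulting identity is precisely~\eqref{e23}, so Proposition~\ref{proposition2} applies. Your version simply spells out the cancellations that the paper leaves implicit.
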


\begin{proof}
What we need is to show that (\ref{e23}) is satisf\/ied. Indeed, writing (\ref{e43}) for $\alpha _{i}$ and $\alpha_{j}$ and subtracting the second from the f\/irst, the corollary follows.
\end{proof}

Note that, for a pair of compatible Poisson structures, $J_{1}$ and $J_{2}$, the dilatation symmetry $J\rightarrow fJ$ and the additive symmetry $J_{1}+J_{2}$ do not imply that $J_{1}+fJ_{2}$ is a Poisson structure. Indeed, if we apply the Jacobi identity condition and using triple vector identity
\begin{gather*}
( J_{1}+fJ_{2}) \cdot \nabla \times (J_{1}+fJ_{2})=-\nabla f\cdot ( J_{1}\times J_{2}) =0,
\end{gather*}
which implies that
\begin{gather*}
\widehat{e}_{1}\cdot \nabla f=0.
\end{gather*}

Now we try to describe the relation between the pair of compatible Poisson structures and Hamiltonian functions. But f\/irst, we need the following lemma to describe this relation.

\begin{Lemma}\label{lemma1}
For the bi-Hamiltonian system with a pair of compatible Poisson structures defined above,
\begin{gather*}
\nabla \cdot \widehat{e}_{1}=\widehat{e}_{1}\cdot \nabla \ln \frac{\alpha_{1}\alpha _{2}( \mu _{2}-\mu _{1}) }{\Vert v\Vert ^{2}}.
\end{gather*}
\end{Lemma}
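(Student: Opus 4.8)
The plan is to show that both sides of the asserted identity reduce to the single combination of structure functions $C_{31}^{3}-C_{12}^{2}$, and then to compare.

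I would first treat the right-hand side. Splitting the logarithm in the convenient grouped form
\[
\ln \frac{\alpha _{1}\alpha _{2}(\mu _{2}-\mu _{1})}{\Vert v\Vert ^{2}}
=\ln \frac{\alpha _{1}}{\Vert v\Vert }+\ln \frac{\alpha _{2}}{\Vert v\Vert }+\ln (\mu _{2}-\mu _{1}),
\]
I apply $\widehat{e}_{1}\cdot \nabla $ term by term. The first two summands are handled at once by \eqref{e43}, giving $\mu _{i}C_{12}^{3}+C_{31}^{3}$ for $i=1,2$; writing the argument this way is precisely what absorbs the $\Vert v\Vert $ contributions. For the last summand I use the Riccati equation \eqref{e20} for $\mu _{1}$ and $\mu _{2}$ and subtract. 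The crucial simplification is that the quadratic terms recombine through $\mu _{2}^{2}-\mu _{1}^{2}=(\mu _{2}-\mu _{1})(\mu _{1}+\mu _{2})$, so that $\widehat{e}_{1}\cdot \nabla (\mu _{2}-\mu _{1})$ carries an overall factor $(\mu _{2}-\mu _{1})$; dividing then yields $\widehat{e}_{1}\cdot \nabla \ln (\mu _{2}-\mu _{1})=-(C_{31}^{3}+C_{12}^{2})-(\mu _{1}+\mu _{2})C_{12}^{3}$. Adding the three pieces, the terms proportional to $(\mu _{1}+\mu _{2})C_{12}^{3}$ cancel and the right-hand side collapses to $C_{31}^{3}-C_{12}^{2}$.

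Next I would compute the left-hand side purely geometrically. The divergence of a frame vector in an orthonormal (non-coordinate) frame is governed by the structure functions via the Koszul formula: since the inner products $\langle \widehat{e}_{i},\widehat{e}_{j}\rangle $ are constant, the Levi-Civita connection satisfies $2\langle \nabla _{\widehat{e}_{i}}\widehat{e}_{j},\widehat{e}_{k}\rangle =C_{ij}^{k}-C_{jk}^{i}+C_{ki}^{j}$. Tracing gives
\[
\nabla \cdot \widehat{e}_{1}=\sum_{k}\langle \nabla _{\widehat{e}_{k}}\widehat{e}_{1},\widehat{e}_{k}\rangle =\sum_{k}C_{k1}^{k},
\]
where I use the antisymmetry $C_{ij}^{k}=-C_{ji}^{k}$ and $C_{kk}^{1}=0$. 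Since $C_{11}^{1}=0$ and $C_{21}^{2}=-C_{12}^{2}$, this equals $C_{31}^{3}-C_{12}^{2}$, matching the right-hand side, and the lemma follows.

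The main obstacle I anticipate is the left-hand computation: one must correctly invoke the orthonormal-frame divergence formula rather than a naive coordinate expression, and keep the antisymmetry conventions of the $C_{ij}^{k}$ straight, since a sign slip there would destroy the match. On the right-hand side the only delicate point is the handling of $\ln (\mu _{2}-\mu _{1})$, where the factorization of $\mu _{2}^{2}-\mu _{1}^{2}$ is exactly what makes the logarithmic derivative regular and produces the same combination of structure functions.
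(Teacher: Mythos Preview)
Your proof is correct and follows essentially the same route as the paper: both arguments use \eqref{e43} for the $\alpha_i/\Vert v\Vert$ pieces, subtract the Riccati equations \eqref{e20} and divide by $\mu_2-\mu_1$ to handle $\ln(\mu_2-\mu_1)$, and invoke the orthonormal-frame identity $\nabla\cdot\widehat{e}_1=\sum_k C_{k1}^k$. The only cosmetic differences are your grouping of the logarithm (pairing each $\alpha_i$ with a copy of $\Vert v\Vert$ rather than treating $\alpha_1\alpha_2$ and $\Vert v\Vert^2$ separately as in the paper) and your explicit derivation of the divergence formula via Koszul, which the paper simply quotes.
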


\begin{proof}
Adding the equations for integrability conditions of Hamiltonian functions (\ref{e43}) for $i=1,2$, we get
\begin{gather}
\widehat{e}_{1}\cdot \nabla \ln ( \alpha _{1}\alpha _{2}) =\widehat{e}_{1}\cdot \nabla \ln \big( \Vert v\Vert^{2}\big)+2C_{31}^{3}+(\mu _{1}+\mu _{2}) C_{12}^{3}.\label{e47}
\end{gather}
On the other hand, subtracting the equations (\ref{e20}) satisf\/ied by $\mu _{1}$ and $\mu _{2}$, and dividing by $( \mu_{2}-\mu_{1}) $,
\begin{gather}
\widehat{e}_{1}\cdot \nabla \ln (\mu _{2}-\mu _{1}) =-\big(C_{31}^{3}+C_{12}^{2}\big) -(\mu _{1}+\mu _{2}) C_{12}^{3} . \label{e48}
\end{gather}
Adding (\ref{e47}) to (\ref{e48}) and using
\begin{gather*}
\nabla \cdot \widehat{e}_{1}=C_{i1}^{i},
\end{gather*}
we get
\begin{gather*}
\widehat{e}_{1}\cdot \nabla \ln ( \alpha _{1}\alpha _{2} (\mu_{2}-\mu _{1}) ) =\widehat{e}_{1}\cdot \nabla \ln
\big(\Vert v\Vert ^{2}\big) +\nabla \cdot \widehat{e}_{1},
\end{gather*}
and the lemma follows.
\end{proof}

\begin{Proposition}\label{proposition4} Given a bi-Hamiltonian system with a pair of compatible Poisson structures, there exists a canonical pair of compatible Poisson structures $K_{1}$, $K_{2}$ with the same Hamiltonian functions $H_{1}$, $H_{2}$ such that
\begin{gather*}
K_{i}=(-1)^{i+1}\phi \nabla H_{i},
\end{gather*}
where
\begin{gather*}
\phi =\frac{\alpha _{1}\alpha _{2}(\mu _{2}-\mu _{1}) }{\Vert v\Vert }.
\end{gather*}
\end{Proposition}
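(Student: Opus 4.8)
The plan is to realize $v$ in Nambu form $v=\phi\,\nabla H_{1}\times\nabla H_{2}$, from which the vectors $K_{i}$ and all of their properties follow almost formally. First I would observe that, by \eqref{e32}, both $\nabla H_{1}$ and $\nabla H_{2}$ are orthogonal to $\widehat{e}_{1}$, so they lie in the plane spanned by $\widehat{e}_{2}$, $\widehat{e}_{3}$; since $H_{1}$, $H_{2}$ are functionally independent their gradients span this plane, and $\nabla H_{1}\times\nabla H_{2}$ is therefore a nowhere-vanishing multiple of $\widehat{e}_{1}$, i.e.\ of $v$. This produces a scalar $\phi$ with $v=\phi\,\nabla H_{1}\times\nabla H_{2}$, and singles out the candidates $K_{1}=\phi\,\nabla H_{1}$, $K_{2}=-\phi\,\nabla H_{2}$.

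The substantive step is to identify this $\phi$ with $\alpha_{1}\alpha_{2}(\mu_{2}-\mu_{1})/\Vert v\Vert$. For this I would resolve each $\nabla H_{i}$ in the orthogonal basis $\{J_{i},u_{i}\}$ of the $\widehat{e}_{2}$--$\widehat{e}_{3}$ plane (note $J_{i}\cdot u_{i}=0$), read off the $u_{i}$-components from the Hamiltonian equation \eqref{e35}/\eqref{e37}, and compute $(\nabla H_{1}\times\nabla H_{2})\cdot\widehat{e}_{1}$ directly. The essential consistency is supplied by Lemma~\ref{lemma1}: rewritten as $\nabla\cdot\widehat{e}_{1}=\widehat{e}_{1}\cdot\nabla\ln(\phi/\Vert v\Vert)$ it is exactly the statement $\nabla\cdot(v/\phi)=0$, and since $\nabla\cdot(\nabla H_{1}\times\nabla H_{2})=0$ automatically, this is precisely the condition forcing $v/\phi=\nabla H_{1}\times\nabla H_{2}$ with the stated $\phi$ once the Hamiltonians are taken in their canonical normalization. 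I expect this coefficient identification---reconciling the algebraic value of the cross product with the analytic content of Lemma~\ref{lemma1}, and thereby fixing the residual freedom in $H_{1}$, $H_{2}$---to be the main obstacle.

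The remaining assertions are immediate from the gradient form. That each $K_{i}$ is a Poisson vector is automatic: for any function $f$ and any $H$ one has $\nabla\times(f\nabla H)=\nabla f\times\nabla H$, whence $(f\nabla H)\cdot\nabla\times(f\nabla H)=f\,\nabla H\cdot(\nabla f\times\nabla H)=0$, which is exactly the Jacobi identity \eqref{e9}; taking $f=\phi$ and $H=H_{i}$ settles both $K_{1}$ and $K_{2}$. That the new pair reproduces the system with the same Hamiltonians is the computation $K_{1}\times\nabla H_{2}=\phi\,\nabla H_{1}\times\nabla H_{2}=v$ and $K_{2}\times\nabla H_{1}=-\phi\,\nabla H_{2}\times\nabla H_{1}=\phi\,\nabla H_{1}\times\nabla H_{2}=v$, matching \eqref{e10}. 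Finally, compatibility follows from the same gradient trick applied to the sum: $K_{1}+K_{2}=\phi\,\nabla(H_{1}-H_{2})$ is again of the form $f\nabla H$, hence a Poisson vector by the computation above, so it satisfies \eqref{e9} and the pair is compatible in the sense of Definition~\ref{definition1}; the independence required for \eqref{e11} comes from the functional independence of $H_{1}$, $H_{2}$.
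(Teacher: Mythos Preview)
Your route is genuinely different from the paper's, and most of it works well. The paper does not use the Nambu form; it expands $\nabla H_{i}=\sigma_{i}^{j}J_{j}$ in the $\{J_{1},J_{2}\}$ basis, uses $\nabla\times\nabla H_{i}=0$ together with the compatibility relation \eqref{e25} to prove that every coefficient has the form $\sigma_{i}^{j}=\Psi_{i}^{j}/\phi$ with $\widehat{e}_{1}\cdot\nabla\Psi_{i}^{j}=0$, and then \emph{replaces} the Poisson vectors by the combinations $K_{1}=(J_{1}-\Psi_{1}^{2}J_{2})/(1+\Psi_{2}^{1}\Psi_{1}^{2})$, $K_{2}=(J_{2}+\Psi_{2}^{1}J_{1})/(1+\Psi_{2}^{1}\Psi_{1}^{2})$, which are automatically parallel to $\nabla H_{1},\nabla H_{2}$. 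Your closing verifications---Jacobi for $f\nabla H$ via $(f\nabla H)\cdot(\nabla f\times\nabla H)=0$, the bi-Hamiltonian identities $K_{1}\times\nabla H_{2}=K_{2}\times\nabla H_{1}=v$, and compatibility from $K_{1}+K_{2}=\phi\,\nabla(H_{1}-H_{2})$---are correct and considerably cleaner than what the paper does at that stage.

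The gap is precisely where you anticipated it, and the fix you propose does not close it. From \eqref{e10} the expansion $\nabla H_{i}=\sigma_{i}^{j}J_{j}$ only determines the ``diagonal'' entries $\sigma_{1}^{1}=-\sigma_{2}^{2}=-1/\phi$; the off-diagonal $\sigma_{1}^{2},\sigma_{2}^{1}$ remain free, and they enter $\nabla H_{1}\times\nabla H_{2}$. Your Lemma~\ref{lemma1} argument, rewritten as $\nabla\cdot(v/\phi)=0$, together with $\nabla\cdot(\nabla H_{1}\times\nabla H_{2})=0$, only yields $\widehat{e}_{1}\cdot\nabla g=0$ for the ratio $g$ in $v/\phi=g\,\nabla H_{1}\times\nabla H_{2}$; it does not force $g\equiv 1$. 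The paper actually computes $g=-\bigl(1+\Psi_{2}^{1}\Psi_{1}^{2}\bigr)^{-1}$ via \eqref{e69}, a flow-invariant but in general nonconstant function. Absorbing $g$ by renormalising $H_{1},H_{2}$ (your ``canonical normalization'') is not allowed here, since the proposition keeps the Hamiltonians fixed; the paper instead absorbs it on the Poisson side, passing from $J_{i}$ to $K_{i}$ and thereby changing the $\alpha_{i},\mu_{i}$ that enter $\phi$. To complete your argument you would need to reproduce that step: show the extra factor is flow-invariant (your divergence computation does this) and then exhibit a rescaling of the Poisson vectors---not of the $H_{i}$---that removes it.
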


\begin{proof}
Since Poisson vector f\/ields are linearly independent, one could write Hamiltonians in terms of Poisson vector f\/ields as
\begin{gather*}
\nabla H_{i}=\sigma _{i}^{j}J_{j}.
\end{gather*}
By using (\ref{e10}), we get
\begin{gather*}
\sigma _{2}^{2}=-\sigma _{1}^{1}=\frac{\Vert v\Vert }{\alpha_{1}\alpha _{2}(\mu _{2}-\mu _{1}) }.
\end{gather*}
On the other hand, we have
\begin{gather*}
\nabla \times \nabla H_{i}=\nabla \sigma _{i}^{j}\times J_{j}+ \sigma_{i}^{j}\nabla\times J_{j}=0.
\end{gather*}
Taking the dot product of both sides with $J_{1}$ and $J_{2}$, and using the compatibility condition, we obtain
\begin{gather}
\widehat{e}_{1}\cdot \nabla \ln \sigma _{j}^{i}=\frac{J_{1}\cdot (\nabla \times J_{2} ) }{\alpha _{1}\alpha _{2}( \mu_{2}-\mu_{1}) }. \label{e60}
\end{gather}
Inserting (\ref{e31}) into (\ref{e27}) and using (\ref{e60}),
\begin{gather*}
\widehat{e}_{1}\cdot \nabla \ln \sigma _{j}^{i}=-\widehat{e}_{1}\cdot \nabla \ln \phi,
\end{gather*}
which leads to
\begin{gather*}
\sigma _{j}^{i}=\frac{\Psi _{j}^{i}}{\phi },
\end{gather*}
where
\begin{gather*}
\widehat{e}_{1}\cdot \nabla \Psi _{j}^{i}=0.
\end{gather*}
Therefore, we have
\begin{gather}
\nabla H_{1} = \frac{1}{\phi }\big( \Psi _{1}^{1}J_{1}+\Psi_{1}^{2}J_{2}\big),\qquad \nabla H_{2} = \frac{1}{\phi }\big( \Psi_{2}^{1}J_{1}-\Psi_{1}^{1}J_{2}\big).\label{e65}
\end{gather}
Inserting (\ref{e65}) into (\ref{e10}), we get
\begin{gather*}
\Psi _{1}^{1}=-1,
\end{gather*}
and f\/inally,
\begin{gather*}
\nabla H_{1} = -\frac{\Vert v\Vert }{\alpha_{1}\alpha_{2}(\mu _{2}-\mu _{1}) }\big( J_{1}-\Psi_{1}^{2}J_{2}\big),\qquad
\nabla H_{2} = \frac{\Vert v\Vert }{\alpha_{1}\alpha_{2}(\mu _{2}-\mu _{1}) }\big( \Psi_{2}^{1}J_{1}+J_{2}\big).
\end{gather*}
Note that,
\begin{gather}
\nabla H_{1}\times \nabla H_{2}=-\big( 1+\Psi _{2}^{1}\Psi _{1}^{2}\big) \frac{\Vert v\Vert ^{2}}{\alpha _{1}\alpha _{2} ( \mu_{2}-\mu_{1} ) }\widehat{e}_{1}. \label{e69}
\end{gather}
For the Hamiltonians to be functionally independent, r.h.s.\ of (\ref{e69}) must not vanish, i.e.,
\begin{gather*}
1+\Psi _{2}^{1}\Psi _{1}^{2}\neq 0.
\end{gather*}
Now let us def\/ine
\begin{gather*}
K_{1} = \frac{J_{1}-\Psi _{1}^{2}J_{2}}{1+\Psi _{2}^{1}\Psi _{1}^{2}} = -\frac{\alpha _{1}\alpha _{2}(\mu _{2}-\mu _{1}) }{\big(1+\Psi _{2}^{1}\Psi _{1}^{2}\big) \Vert v\Vert }\nabla H_{1}, \qquad
K_{2} = \frac{J_{2}+\Psi _{2}^{1}J_{1}}{1+\Psi _{2}^{1}\Psi _{1}^{2}} = \frac{\alpha _{1}\alpha _{2}(\mu _{2}-\mu _{1}) }{\big(1+\Psi
_{2}^{1}\Psi _{1}^{2}\big) \Vert v\Vert }\nabla H_{2}.
\end{gather*}
By (\ref{e10}), we get
\begin{gather*} 
K_{1}\times \nabla H_{1} = K_{2}\times \nabla H_{2} = 0, \qquad
K_{2}\times \nabla H_{1} = K_{1}\times \nabla H_{2} = v.
\end{gather*}
Choosing $K_{i}$'s to be our new Poisson vector f\/ields, the proposition follows.
\end{proof}

Consequently, we can write the local existence theorem of bi-Hamiltonian systems in three dimensions.

\begin{Theorem}\label{theorem1}Any three-dimensional dynamical system
\begin{gather}
\dot{x}(t) =v ( x(t) )\label{e73}
\end{gather}
has a pair of compatible Poisson structures
\begin{gather*}
J_{i}=\alpha _{i}\big( \widehat{e}_{2}+\mu _{i}\widehat{e}_{3}\big),
\end{gather*}
in which $\mu _{i}$'s are determined by the equation
\begin{gather*}
\widehat{e}_{1}\cdot \nabla \mu _{i}=-C_{31}^{2}-\mu _{i}\big(C_{31}^{3}+C_{12}^{2}\big) -\mu _{i}^{2}C_{12}^{3},
\end{gather*}
and $\alpha _{i}$'s are determined by the equation
\begin{gather*}
\widehat{e}_{1}\cdot \nabla \ln \frac{\alpha _{i}}{\Vert v\Vert }=C_{31}^{3}+\mu _{i}C_{12}^{3}.
\end{gather*}
Furthermore, \eqref{e73} is a locally bi-Hamiltonian system with a pair of local Hamiltonian functions determined by
\begin{gather}
J_{i}= ( -1 ) ^{i+1}\phi \nabla H_{i}, \label{e77}
\end{gather}
where
\begin{gather}
\phi =\frac{\alpha _{1}\alpha _{2} ( \mu _{2}-\mu _{1} ) }{\Vert v\Vert }. \label{e78}
\end{gather}
\end{Theorem}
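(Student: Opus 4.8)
The plan is to assemble the theorem from the local results already in hand, since the statement is essentially a consolidation of Propositions~\ref{proposition1}--\ref{proposition4} together with Corollary~\ref{corollary1}. First I would invoke Proposition~\ref{proposition1}: with the adapted orthonormal frame $\widehat{e}_1 = v/\Vert v\Vert$, requiring the Poisson vector field to be orthogonal to $v$ and imposing the Jacobi identity~\eqref{e9} collapses to the scalar Riccati equation~\eqref{e21} along the integral curves of~$v$. Since those curves foliate a neighbourhood of each point, the two linearly independent solutions $\mu_1,\mu_2$ of~\eqref{e20} extend across the neighbourhood and produce the two independent Poisson vector fields $J_i = \alpha_i(\widehat{e}_2 + \mu_i\widehat{e}_3)$, with the scalings $\alpha_i$ as yet undetermined.

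Next I would pin down the $\alpha_i$ through the transport equation~\eqref{e43}. By Proposition~\ref{proposition3}, equation~\eqref{e43} is exactly the integrability condition for the linear system~\eqref{e32}, \eqref{e37} governing the Hamiltonians, so that once $\alpha_i$ solves~\eqref{e43} the invariant functions $H_i$ realising the Hamiltonian form~\eqref{e10} exist locally; as in Proposition~\ref{proposition1}, the solution of~\eqref{e43} again propagates along the integral curves of $v$ to the full neighbourhood. This is what yields the ``furthermore'' clause: the system is locally bi-Hamiltonian with these $H_i$.

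The only point that genuinely needs checking is that fixing $\alpha_i$ for the bi-Hamiltonian requirement does not clash with compatibility, since a priori these are two separate demands on the scalings. This is settled by Corollary~\ref{corollary1}: writing~\eqref{e43} for the indices $i$ and $j$ and subtracting recovers precisely $\widehat{e}_1\cdot\nabla\ln(\alpha_i/\alpha_j) = C_{12}^3(\mu_i-\mu_j)$, the compatibility condition~\eqref{e23} of Proposition~\ref{proposition2}. The structural reason this goes through is that the single coefficient $C_{12}^3$ controlling compatibility is the same one appearing in the bi-Hamiltonian transport equation~\eqref{e43}, so the bi-Hamiltonian normalisation is automatically compatible. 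This is the one place I would expect to need care, and it is the heart of why the combined statement holds.

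Finally, with a compatible bi-Hamiltonian pair secured, I would invoke Proposition~\ref{proposition4} to pass to its canonical pair $K_i = (-1)^{i+1}\phi\,\nabla H_i$ (with $\phi$ as in~\eqref{e78}), which retains the same Hamiltonians; relabelling these canonical structures as $J_i$ gives the relation~\eqref{e77}. Collecting~\eqref{e20}, \eqref{e43}, and the canonical form~\eqref{e77}--\eqref{e78} then completes the theorem. I anticipate no substantive obstacle beyond the compatibility check just described, the remaining content being the routine ODE-extension arguments already carried out in the cited propositions.
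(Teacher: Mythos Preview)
Your proposal is correct and mirrors the paper's approach exactly: the paper presents Theorem~\ref{theorem1} as a direct consequence of the preceding results (introduced with ``Consequently, we can write the local existence theorem\ldots'') and gives no separate proof, so your assembly of Propositions~\ref{proposition1}--\ref{proposition4} with Corollary~\ref{corollary1} is precisely what is intended. Your identification of the compatibility check via Corollary~\ref{corollary1} as the only nontrivial step is apt, and your final relabelling of the canonical $K_i$ from Proposition~\ref{proposition4} as $J_i$ matches the theorem's notation.
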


\section[Global existence of compatible bi-Hamiltonian structure in 3D]{Global existence of compatible bi-Hamiltonian\\ structure in 3D}\label{section3}

In this section, we investigate the conditions for which the local existence theorem holds globally. To study the global properties of the vector f\/ield $\boldsymbol{v}$ by topological means, we relate the vector f\/ield with its normal bundle. Let $E$ be the one-dimensional subbundle of $TM$ generated by $v$. Let $Q=TM/E$ be the normal bundle of $v$. By using the cross product with $\widehat{e}_{1}$, we can def\/ine a~complex structure $\Lambda$ on the f\/ibers of $Q\rightarrow M$, and $Q$ becomes a complex line bundle over~$M$.

\subsection{Bi-Hamiltonian structure in 3D with dif\/ferential forms}\label{section3.1}

In order to obtain and express the obstructions to the global existence of bi-Hamiltonian structures on orientable three manifolds by certain cohomology groups and characteristic classes, we will reformulate the problem by using dif\/ferential forms. For this purpose, let $\boldsymbol{\Omega }$ be the volume form associated to the Riemannian metric $\boldsymbol{g}$ of~$M$. Then, there is a local one-form $\boldsymbol{J}$ associated with a local Poisson bivector f\/ield~$\mathcal{J}$,
\begin{gather*}
\boldsymbol{J}=\imath _{\mathcal{J}}\boldsymbol{\Omega },
\end{gather*}
which is called the local Poisson one-form. The bi-Hamiltonian system (\ref{e10}) can be written as
\begin{gather}
\iota _{v}\boldsymbol{\Omega }=\boldsymbol{J}_{1}\wedge {\rm d}H_{2}=\boldsymbol{J}_{2}\wedge {\rm d}H_{1}. \label{e83}
\end{gather}
Note that, although the l.h.s.\ of this equality is globally def\/ined, r.h.s.\ is def\/ined only locally, therefore it holds only locally. Now the Jacobi identity is given by
\begin{gather}
\boldsymbol{J}_{i}\wedge {\rm d}\boldsymbol{J}_{i}=0\qquad \text{for} \quad i=1,2, \label{e84}
\end{gather}
and compatibility amounts to
\begin{gather*}
\boldsymbol{J}_{1}\wedge {\rm d}\boldsymbol{J}_{2}=-\boldsymbol{J}_{2}\wedge {\rm d}\boldsymbol{J}_{1}.
\end{gather*}
By (\ref{e77}), $\boldsymbol{J}_{1}$ and $\boldsymbol{J}_{2}$ can be chosen to be proportional to ${\rm d}H_{1}$ and ${\rm d}H_{2}$, respectively, and hence~(\ref{e83}) takes the form
\begin{gather*}
\iota _{v}\boldsymbol{\Omega }=\phi {\rm d}H_{1}\wedge {\rm d}H_{2}.
\end{gather*}

The Jacobi identity for Poisson 1-forms (\ref{e84}) implies the existence of 1-forms $\boldsymbol{\beta }_{i}$ such that
\begin{gather}
{\rm d}\boldsymbol{J}_{i}=\boldsymbol{\beta }_{i}\wedge \boldsymbol{J}_{i} \label{e87}
\end{gather}
for each $i=1,2$. In the next proposition we are going to show that the compatibility of Poisson structures allows us to combine $\boldsymbol{\beta }_{1} $ and $\boldsymbol{\beta }_{2}$ into a single one.

\begin{Proposition}\label{proposition5}
There is a $1$-form $\boldsymbol{\beta }$ such that
\begin{gather*}
{\rm d}\boldsymbol{J}_{i}=\boldsymbol{\beta }\wedge \boldsymbol{J}_{i} 
\end{gather*}
for each $i=1,2$.
\end{Proposition}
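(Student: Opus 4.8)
The plan is to start from the two given relations $\mathrm{d}\boldsymbol{J}_{i}=\boldsymbol{\beta}_{i}\wedge\boldsymbol{J}_{i}$ for $i=1,2$ and exploit the compatibility condition $\boldsymbol{J}_{1}\wedge\mathrm{d}\boldsymbol{J}_{2}=-\boldsymbol{J}_{2}\wedge\mathrm{d}\boldsymbol{J}_{1}$ to force $\boldsymbol{\beta}_{1}$ and $\boldsymbol{\beta}_{2}$ to agree modulo the $\boldsymbol{J}_{i}$'s, so that a single $\boldsymbol{\beta}$ can be built. The key observation is that on an orientable three-manifold the triple $\{\iota_{v}\boldsymbol{\Omega},\boldsymbol{J}_{1},\boldsymbol{J}_{2}\}$ — or more conveniently a cobasis adapted to $\{\widehat{e}_{1},\widehat{e}_{2},\widehat{e}_{3}\}$ — gives a local coframe, so any $1$-form decomposes uniquely in this basis. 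Writing $\boldsymbol{\beta}_{i}=a_{i}\boldsymbol{J}_{1}+b_{i}\boldsymbol{J}_{2}+c_{i}\,\iota_{v}\boldsymbol{\Omega}$, I first note that the component of $\boldsymbol{\beta}_{i}$ along $\boldsymbol{J}_{i}$ itself is irrelevant in $\boldsymbol{\beta}_{i}\wedge\boldsymbol{J}_{i}$, so each $\boldsymbol{\beta}_{i}$ is only determined modulo multiples of $\boldsymbol{J}_{i}$.

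First I would substitute the two relations $\mathrm{d}\boldsymbol{J}_{2}=\boldsymbol{\beta}_{2}\wedge\boldsymbol{J}_{2}$ and $\mathrm{d}\boldsymbol{J}_{1}=\boldsymbol{\beta}_{1}\wedge\boldsymbol{J}_{1}$ into the compatibility identity, obtaining
\begin{gather*}
\boldsymbol{J}_{1}\wedge\boldsymbol{\beta}_{2}\wedge\boldsymbol{J}_{2}+\boldsymbol{J}_{2}\wedge\boldsymbol{\beta}_{1}\wedge\boldsymbol{J}_{1}=0,
\end{gather*}
which after reordering the wedge factors becomes $(\boldsymbol{\beta}_{2}-\boldsymbol{\beta}_{1})\wedge\boldsymbol{J}_{1}\wedge\boldsymbol{J}_{2}=0$. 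Since $\boldsymbol{J}_{1}\wedge\boldsymbol{J}_{2}$ is a nonvanishing $2$-form (the Poisson vectors are linearly independent, corresponding to (\ref{e11})), this $3$-form equation says precisely that $\boldsymbol{\beta}_{2}-\boldsymbol{\beta}_{1}$ lies in the span of $\boldsymbol{J}_{1}$ and $\boldsymbol{J}_{2}$; that is, there exist functions $p,q$ with $\boldsymbol{\beta}_{2}-\boldsymbol{\beta}_{1}=p\,\boldsymbol{J}_{1}+q\,\boldsymbol{J}_{2}$.

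Next I would use the freedom in each $\boldsymbol{\beta}_{i}$ to absorb these correction terms. Define $\boldsymbol{\beta}=\boldsymbol{\beta}_{1}+q\,\boldsymbol{J}_{2}=\boldsymbol{\beta}_{2}-p\,\boldsymbol{J}_{1}$. Then for the first structure, $\boldsymbol{\beta}\wedge\boldsymbol{J}_{1}=\boldsymbol{\beta}_{1}\wedge\boldsymbol{J}_{1}+q\,\boldsymbol{J}_{2}\wedge\boldsymbol{J}_{1}$; this is not yet $\mathrm{d}\boldsymbol{J}_{1}$ unless the extra term vanishes, so the naive single choice must be checked against both equations simultaneously. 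The clean route is to set $\boldsymbol{\beta}=\boldsymbol{\beta}_{1}+q\,\boldsymbol{J}_{2}$ and verify $\boldsymbol{\beta}\wedge\boldsymbol{J}_{1}=\mathrm{d}\boldsymbol{J}_{1}$ requires $q\,\boldsymbol{J}_{2}\wedge\boldsymbol{J}_{1}=0$, forcing $q\equiv0$; symmetrically $p\equiv0$. Thus the real content is that compatibility already forces $p=q=0$, i.e.\ $\boldsymbol{\beta}_{2}-\boldsymbol{\beta}_{1}=0$ once the $\boldsymbol{\beta}_{i}$ are suitably normalized, and $\boldsymbol{\beta}$ is simply the common value.

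The main obstacle I anticipate is handling the inherent ambiguity correctly: because $\boldsymbol{\beta}_{i}$ is only defined modulo $\boldsymbol{J}_{i}$, one must make a definite choice of representatives before the conclusion $\boldsymbol{\beta}_{1}=\boldsymbol{\beta}_{2}$ is even meaningful. I would resolve this by using Proposition~\ref{proposition4} and the canonical form (\ref{e77}), which presents $\boldsymbol{J}_{i}$ as proportional to $\mathrm{d}H_{i}$; then $\mathrm{d}\boldsymbol{J}_{i}=\mathrm{d}\ln|(-1)^{i+1}\phi|\wedge\boldsymbol{J}_{i}$, so the natural canonical choice is $\boldsymbol{\beta}_{1}=\boldsymbol{\beta}_{2}=\mathrm{d}\ln|\phi|$, with $\phi$ given by (\ref{e78}). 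This immediately exhibits the single common $1$-form $\boldsymbol{\beta}=\mathrm{d}\ln|\phi|$ satisfying $\mathrm{d}\boldsymbol{J}_{i}=\boldsymbol{\beta}\wedge\boldsymbol{J}_{i}$ for both $i$, and the earlier wedge computation confirms this is consistent with compatibility. This second approach via the canonical pair is cleaner and sidesteps the ambiguity entirely, so I expect it to be the one to carry out in full.
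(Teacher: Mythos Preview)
Your first approach is exactly the paper's route, and it does work --- you simply swapped the correction terms. From $\boldsymbol{\beta}_{2}-\boldsymbol{\beta}_{1}=p\,\boldsymbol{J}_{1}+q\,\boldsymbol{J}_{2}$ the correct definition is
\[
\boldsymbol{\beta}=\boldsymbol{\beta}_{1}+p\,\boldsymbol{J}_{1}=\boldsymbol{\beta}_{2}-q\,\boldsymbol{J}_{2},
\]
not $\boldsymbol{\beta}_{1}+q\,\boldsymbol{J}_{2}$. With this choice, $\boldsymbol{\beta}\wedge\boldsymbol{J}_{1}=(\boldsymbol{\beta}_{1}+p\,\boldsymbol{J}_{1})\wedge\boldsymbol{J}_{1}=\boldsymbol{\beta}_{1}\wedge\boldsymbol{J}_{1}=\mathrm{d}\boldsymbol{J}_{1}$ since $\boldsymbol{J}_{1}\wedge\boldsymbol{J}_{1}=0$, and likewise $\boldsymbol{\beta}\wedge\boldsymbol{J}_{2}=(\boldsymbol{\beta}_{2}-q\,\boldsymbol{J}_{2})\wedge\boldsymbol{J}_{2}=\mathrm{d}\boldsymbol{J}_{2}$. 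This is precisely the paper's argument (with $b_{1}=-p$, $b_{2}=-q$). Your conclusion that ``compatibility already forces $p=q=0$'' is therefore wrong: compatibility only pins down $\boldsymbol{\beta}_{2}-\boldsymbol{\beta}_{1}$ modulo $\mathrm{span}(\boldsymbol{J}_{1},\boldsymbol{J}_{2})$, and for generic initial representatives $\boldsymbol{\beta}_{i}$ the coefficients $p,q$ need not vanish. The point is that one can \emph{adjust} each $\boldsymbol{\beta}_{i}$ by a multiple of its own $\boldsymbol{J}_{i}$ to reach a common value --- no further constraint is needed.

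Your fallback via Proposition~\ref{proposition4} and~(\ref{e77}) does produce a valid $\boldsymbol{\beta}=\mathrm{d}\ln\phi$, but it proves the statement only for the canonical pair $K_{i}$ (or for $\boldsymbol{J}_{i}$ already written as $\phi\,\mathrm{d}H_{i}$), not for an arbitrary compatible pair satisfying~(\ref{e87}). The paper's direct wedge argument is both shorter and more general, and you were one index-swap away from it.
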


\begin{proof}
Applying (\ref{e87}) to the compatibility condition
\begin{gather*}
\boldsymbol{J}_{1}\wedge {\rm d}\boldsymbol{J}_{2}+\boldsymbol{J}_{2}\wedge {\rm d}\boldsymbol{J}_{1}=0,
\end{gather*}
we get
\begin{gather*}
 ( \boldsymbol{\beta }_{1}-\boldsymbol{\beta }_{2} ) \wedge \boldsymbol{J}_{1}\wedge \boldsymbol{J}_{2}=0,
\end{gather*}
which implies that
\begin{gather*}
\boldsymbol{\beta }_{1}-\boldsymbol{\beta }_{2}=b_{1}\boldsymbol{J}_{1}+b_{2}\boldsymbol{J}_{2},
\end{gather*}
and therefore, we def\/ine
\begin{gather*}
\boldsymbol{\beta }=\boldsymbol{\beta }_{1}-b_{1}\boldsymbol{J}_{1}=\boldsymbol{\beta }_{2}+b_{2}\boldsymbol{J}_{2}.
\end{gather*}
Hence
\begin{gather*}
\boldsymbol{\beta }\wedge \boldsymbol{J}_{i}=\boldsymbol{\beta }_{i}\wedge \boldsymbol{J}_{i}={\rm d}\boldsymbol{J}_{i},
\end{gather*}
and the proposition follows.
\end{proof}

Note that $\boldsymbol{\beta }$ is a $TM$-valued 1-form. Namely,
\begin{gather*}
\iota _{\widehat{e}_{1}}\boldsymbol{\beta }\neq 0
\end{gather*}
in general. Now we are going to show that by an appropriate change of Poisson forms, we may reduce it to a connection 1-form on~$Q$.

\begin{Lemma}\label{lemma2}
\begin{gather*}
\iota _{\widehat{e}_{1}}\boldsymbol{\beta }=\iota _{\widehat{e}_{1}} ( {\rm d}\ln \phi ),
\end{gather*}
where $\phi $ is the function defined in~\eqref{e78}.
\end{Lemma}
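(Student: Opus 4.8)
The plan is to contract the relation of Proposition~\ref{proposition5} with $\widehat{e}_{1}$ and read off the scalar $\iota_{\widehat{e}_{1}}\boldsymbol{\beta}$. First I note that the Poisson one-form $\boldsymbol{J}_{i}=\imath_{\mathcal{J}_{i}}\boldsymbol{\Omega}$ is the metric dual of the Poisson vector field $J_{i}=\alpha_{i}(\widehat{e}_{2}+\mu_{i}\widehat{e}_{3})$, so in the coframe $\{\widehat{e}^{1},\widehat{e}^{2},\widehat{e}^{3}\}$ dual to the orthonormal frame it reads $\boldsymbol{J}_{i}=\alpha_{i}(\widehat{e}^{2}+\mu_{i}\widehat{e}^{3})$. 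Since it carries no $\widehat{e}^{1}$ component, $\iota_{\widehat{e}_{1}}\boldsymbol{J}_{i}=0$. Applying $\iota_{\widehat{e}_{1}}$ to ${\rm d}\boldsymbol{J}_{i}=\boldsymbol{\beta}\wedge\boldsymbol{J}_{i}$ and using the Leibniz rule $\iota_{\widehat{e}_{1}}(\boldsymbol{\beta}\wedge\boldsymbol{J}_{i})=(\iota_{\widehat{e}_{1}}\boldsymbol{\beta})\boldsymbol{J}_{i}-(\iota_{\widehat{e}_{1}}\boldsymbol{J}_{i})\boldsymbol{\beta}$, the last term drops out, leaving $\iota_{\widehat{e}_{1}}{\rm d}\boldsymbol{J}_{i}=(\iota_{\widehat{e}_{1}}\boldsymbol{\beta})\boldsymbol{J}_{i}$. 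Thus it suffices to compute the left-hand side and compare coefficients.

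Next I would expand ${\rm d}\boldsymbol{J}_{i}$ using the structure equations ${\rm d}\widehat{e}^{k}=-\frac{1}{2}C^{k}_{ij}\widehat{e}^{i}\wedge\widehat{e}^{j}$ dual to~(\ref{e14}). Because $\iota_{\widehat{e}_{1}}{\rm d}\boldsymbol{J}_{i}$ is already known to be a multiple of $\boldsymbol{J}_{i}$ (proportionality of its two components is exactly the Jacobi identity $J_{i}\cdot(\nabla\times J_{i})=0$), I only need one component. Since $\iota_{\widehat{e}_{1}}(\widehat{e}^{1}\wedge\widehat{e}^{2})=\widehat{e}^{2}$ while $\widehat{e}^{2}\wedge\widehat{e}^{3}$ contributes nothing, the $\widehat{e}^{2}$-coefficient of $\iota_{\widehat{e}_{1}}{\rm d}\boldsymbol{J}_{i}$ equals the coefficient of $\widehat{e}^{1}\wedge\widehat{e}^{2}$ in ${\rm d}\boldsymbol{J}_{i}$, which a short computation gives as $\widehat{e}_{1}\cdot\nabla\alpha_{i}-\alpha_{i}C^{2}_{12}-\alpha_{i}\mu_{i}C^{3}_{12}$. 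Matching this against the $\widehat{e}^{2}$-coefficient $\alpha_{i}$ of $\boldsymbol{J}_{i}$ yields
\begin{gather*}
\iota_{\widehat{e}_{1}}\boldsymbol{\beta}=\widehat{e}_{1}\cdot\nabla\ln\alpha_{i}-C^{2}_{12}-\mu_{i}C^{3}_{12}.
\end{gather*}

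Finally I would identify this scalar with $\widehat{e}_{1}\cdot\nabla\ln\phi$. Substituting the Hamiltonian integrability relation~(\ref{e43}) in the form $\widehat{e}_{1}\cdot\nabla\ln\alpha_{i}=\widehat{e}_{1}\cdot\nabla\ln\Vert v\Vert+C^{3}_{31}+\mu_{i}C^{3}_{12}$, the two $\mu_{i}C^{3}_{12}$ terms cancel and leave $\iota_{\widehat{e}_{1}}\boldsymbol{\beta}=\widehat{e}_{1}\cdot\nabla\ln\Vert v\Vert+C^{3}_{31}-C^{2}_{12}$, reassuringly independent of~$i$. Using $\nabla\cdot\widehat{e}_{1}=C^{i}_{i1}=C^{3}_{31}-C^{2}_{12}$ together with Lemma~\ref{lemma1} rewritten as $\widehat{e}_{1}\cdot\nabla\ln\phi=\nabla\cdot\widehat{e}_{1}+\widehat{e}_{1}\cdot\nabla\ln\Vert v\Vert$ (since $\alpha_{1}\alpha_{2}(\mu_{2}-\mu_{1})/\Vert v\Vert^{2}=\phi/\Vert v\Vert$), the right-hand side becomes precisely $\widehat{e}_{1}\cdot\nabla\ln\phi=\iota_{\widehat{e}_{1}}({\rm d}\ln\phi)$, as claimed. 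The main obstacle is purely the bookkeeping in the middle step — getting the structure-function signs in ${\rm d}\boldsymbol{J}_{i}$ correct — together with correctly chaining~(\ref{e43}) and Lemma~\ref{lemma1}; the cancellation of the $\mu_{i}$-dependence is the natural consistency check that the computation is on track.
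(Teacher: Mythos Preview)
Your argument is correct and complete. It differs genuinely from the paper's proof. The paper works on the vector-field side: from the Jacobi identity it writes $\nabla\times J_{i}=\xi\times J_{i}$ for a single vector $\xi$, then produces (after an unspelled-out computation) the explicit formula
\[
\xi=\nabla\ln\phi+\widehat{e}_{1}\times\left(\frac{[\widehat{e}_{1}\times J_{1},\widehat{e}_{1}\times J_{2}]}{\phi\Vert v\Vert}-\widehat{e}_{1}\times\nabla\ln\Vert v\Vert\right),
\]
from which $\widehat{e}_{1}\cdot\xi=\widehat{e}_{1}\cdot\nabla\ln\phi$ is read off, and finally identifies $\boldsymbol{\beta}$ with the metric dual of $\xi$. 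Your route stays entirely in the coframe: contracting ${\rm d}\boldsymbol{J}_{i}=\boldsymbol{\beta}\wedge\boldsymbol{J}_{i}$ with $\widehat{e}_{1}$ and matching a single coefficient gives $\iota_{\widehat{e}_{1}}\boldsymbol{\beta}=\widehat{e}_{1}\cdot\nabla\ln\alpha_{i}-C^{2}_{12}-\mu_{i}C^{3}_{12}$, and then (\ref{e43}) and Lemma~\ref{lemma1} identify this with $\widehat{e}_{1}\cdot\nabla\ln\phi$. Your approach is shorter and makes the dependence on the earlier integrability condition~(\ref{e43}) and on Lemma~\ref{lemma1} fully explicit, whereas the paper's approach yields, as a by-product, the complete connection form $\boldsymbol{\beta}$ (not just its $\widehat{e}_{1}$-component), at the cost of a computation it does not display.
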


\begin{proof}
For the proof, we carry out the computation with Poisson vector f\/ields, then transform the result to dif\/ferential forms. The Jacobi identity~(\ref{e9}) implies that $\nabla \times J_{i}$ is orthogonal to~$J_{i}$ and therefore, we get
\begin{gather}
\nabla \times J_{i}=a_{i1}\widehat{e}_{1}+a_{i2}\widehat{e}_{1}\times J_{i}.\label{e96}
\end{gather}
By the def\/inition of Poisson vector f\/ields, we have
\begin{gather*}
J_{1}\times J_{2}=\phi \Vert v\Vert \widehat{e}_{1}.
\end{gather*}
We can rewrite (\ref{e96}) in the form
\begin{gather}
\nabla \times J_{i}=\frac{a_{i1}}{\phi \Vert v\Vert }J_{1}\times J_{2}+a_{i2}\widehat{e}_{1}\times J_{i}. \label{e98}
\end{gather}
Using the compatibility condition (\ref{e25}), we obtain
\begin{gather*}
a_{i1} = ( \nabla \times J_{i}) \cdot \widehat{e}_{1}, \qquad
a_{i2} = \frac{( \nabla \times J_{1}) \cdot J_{2}}{\phi\Vert v\Vert }.
\end{gather*}
Now we def\/ine
\begin{gather*}
\xi =\frac{a_{21}J_{1}-a_{11}J_{2}+( ( \nabla \times J_{1}) \cdot J_{2}) \widehat{e}_{1}}{\phi \Vert \overrightarrow{v}\Vert },
\end{gather*}
and (\ref{e98}) becomes
\begin{gather*}
\nabla \times J_{i}=\xi \times J_{i}.
\end{gather*}
After a bit of computation it is possible to show that
\begin{gather*}
\xi =\nabla \ln \phi +\widehat{e}_{1}\times \left( \frac{[ \widehat{e}_{1}\times J_{1},\widehat{e}_{1}\times J_{2}] }{\phi \Vert
v\Vert }-\widehat{e}_{1}\times \nabla \ln \Vert v\Vert\right).
\end{gather*}
Hence, we have
\begin{gather*}
\widehat{e}_{1}\cdot \xi =\widehat{e}_{1}\cdot \nabla \ln \phi
\end{gather*}
and def\/ining
\begin{gather*}
\boldsymbol{\beta }=\ast \iota _{\xi }\boldsymbol{\Omega},
\end{gather*}
the lemma follows.
\end{proof}

Now we def\/ine new Poisson 1-forms $K_{i}$
\begin{gather*}
\boldsymbol{J}_{i}=\phi \boldsymbol{K}_{i}.
\end{gather*}
Taking the exterior derivatives of both sides
\begin{gather*}
{\rm d}\boldsymbol{J}_{i}={\rm d}\phi \wedge \boldsymbol{K}_{i}+\phi {\rm d}\boldsymbol{K}_{i}=\boldsymbol{\beta }\wedge \phi \boldsymbol{K}_{i}
\end{gather*}
and dividing both sides by $\phi $,
\begin{gather*}
{\rm d}\boldsymbol{K}_{i}= ( \boldsymbol{\beta }-{\rm d}\ln \phi ) \wedge \boldsymbol{K}_{i}.
\end{gather*}
Let
\begin{gather*}
\boldsymbol{\gamma }=\boldsymbol{\beta }-{\rm d}\ln \phi.
\end{gather*}
Now, by the lemma above,
\begin{gather}
\iota _{\widehat{e}_{1}}\boldsymbol{\gamma }=\iota _{\widehat{e}_{1}}\boldsymbol{\beta }-\iota _{\widehat{e}_{1}} ( {\rm d}\ln \phi ) =0, \label{e117}
\end{gather}
therefore,
\begin{gather}
{\rm d}\boldsymbol{K}_{i}=\boldsymbol{\gamma }\wedge \boldsymbol{K}_{i}, \label{e118}
\end{gather}
where $\gamma $ is a connection on~$Q$.

\subsection[The f\/irst obstruction: the Chern class of $Q$]{The f\/irst obstruction: the Chern class of~$\boldsymbol{Q}$}\label{section3.2}

Now we try to f\/ind conditions for which a nonvanishing vector f\/ield $v$ satisf\/ies
\begin{gather}
\boldsymbol{w}=\iota _{v}\boldsymbol{\Omega }=\phi {\rm d}H_{1}\wedge {\rm d}H_{2} \label{e119}
\end{gather}
for some globally def\/ined functions $\phi $, $H_{1}$ and $H_{2}$. For a~two-form to be decomposed into the form~(\ref{e119}), f\/irst of all, the two-form must be written as a product of two globally def\/ined, linearly independent nonvanishing factors. However, such a decomposition may not exist globally. Then, the question is to decompose~$\boldsymbol{w}$ into a product of two globally def\/ined one forms $\boldsymbol{\rho }_{1}$ and $\boldsymbol{\rho }_{2}$
\begin{gather}
\boldsymbol{w}=\boldsymbol{\rho }_{1}\wedge \boldsymbol{\rho }_{2}. \label{e123}
\end{gather}
Since $v$ is a nonvanishing vector f\/ield, $\boldsymbol{w}$ is a $2$-form of constant rank $2$. If we let $S_{\boldsymbol{w}}$ to be the sub-bundle of $TM$ on which $\boldsymbol{w}$ is of maximal rank, then we have $S_{\boldsymbol{w}}\cong Q $ def\/ined above. The following theorem states the necessary and suf\/f\/icient conditions for the decomposition of a~two-form of constant rank $2s$ in the large.

\begin{Theorem}\label{theorem2} Let $\Sigma $ be an $\mathbb{R}^{n}$-bundle over a connected base space $M$. Let $\boldsymbol{w}$ be a $2$-form on $\Sigma $ of constant rank~$2s$. Let $S_{\boldsymbol{w}}$ be the subbundle of $\Sigma $ on which $\boldsymbol{w}$ is of maximal rank. $w$~decomposes if and only if
\begin{itemize}\itemsep=0pt
\item[$i)$] $S_{\boldsymbol{w}}$ is a trivial bundle.

\item[$ii)$] The representation of its normalization as a map $w_{1}\colon M\rightarrow {\rm SO}(2s)/{\rm U}(s)$ arising from any trivialization of $S_{\boldsymbol{w}}$ lifts to ${\rm SO}(2s)$~{\rm \cite{Dibag}}.
\end{itemize}
\end{Theorem}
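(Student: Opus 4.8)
The plan is to recast the existence of a global decomposition $\boldsymbol{w}=\sum_{i=1}^{s}\boldsymbol{\rho}_{2i-1}\wedge\boldsymbol{\rho}_{2i}$ as a reduction-of-structure-group (lifting) problem for the subbundle $S_{\boldsymbol{w}}$, and then to solve that problem through the homotopy types of the classical groups acting on skew forms. This is the global counterpart of the linear Darboux theorem and is precisely the decomposition result of \cite{Dibag}; I would organize the argument around the two orbit pictures of a nondegenerate skew form and the homotopy equivalence between them.

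First I would prove that $i)$ is necessary. If $\boldsymbol{w}=\sum_{i=1}^{s}\boldsymbol{\rho}_{2i-1}\wedge\boldsymbol{\rho}_{2i}$, then, since $\boldsymbol{w}|_{S_{\boldsymbol{w}}}$ is fiberwise nondegenerate of rank $2s$, the $2s$ one-forms $\boldsymbol{\rho}_{j}$ must restrict to a pointwise linearly independent system on $S_{\boldsymbol{w}}$; hence they form a global coframe and trivialize $S_{\boldsymbol{w}}^{*}$, so $S_{\boldsymbol{w}}$ is trivial. The same remark lets one take the $\boldsymbol{\rho}_{j}$ to annihilate $\ker\boldsymbol{w}$, i.e.\ to be sections of $S_{\boldsymbol{w}}^{*}\subset\Sigma^{*}$.

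Assuming $i)$, I would fix a trivialization of $S_{\boldsymbol{w}}$ with dual coframe $\{\epsilon^{1},\dots,\epsilon^{2s}\}$ and write $\boldsymbol{w}=\frac{1}{2}w_{ij}\epsilon^{i}\wedge\epsilon^{j}$, which exhibits $\boldsymbol{w}$ as a smooth map from $M$ into the space $\mathcal{A}_{2s}$ of nondegenerate real skew-symmetric $2s\times 2s$ matrices. Because ${\rm GL}(2s,\mathbb{R})$ acts transitively on $\mathcal{A}_{2s}$ by congruence with stabilizer ${\rm Sp}(2s,\mathbb{R})$, one has $\mathcal{A}_{2s}\cong{\rm GL}(2s,\mathbb{R})/{\rm Sp}(2s,\mathbb{R})$, and a global decomposition is exactly a lift of this map through ${\rm GL}(2s,\mathbb{R})\to{\rm GL}(2s,\mathbb{R})/{\rm Sp}(2s,\mathbb{R})$: such a lift is a pointwise congruence carrying $\boldsymbol{w}$ to the standard form, and the resulting coframe provides the decomposing one-forms $\boldsymbol{\rho}_{j}$.

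Finally I would pass to the compact model. Since $M$ is connected and the Pfaffian has constant sign along the image, $\boldsymbol{w}$ lands in a single component $\mathcal{A}_{2s}^{+}\cong{\rm GL}^{+}(2s,\mathbb{R})/{\rm Sp}(2s,\mathbb{R})$; using the metric from the trivialization and the polar decomposition, the retractions ${\rm GL}^{+}(2s,\mathbb{R})\simeq{\rm SO}(2s)$ and ${\rm Sp}(2s,\mathbb{R})\simeq{\rm U}(s)$ are compatible with the projection, so the fibration ${\rm Sp}\hookrightarrow{\rm GL}^{+}\to{\rm GL}^{+}/{\rm Sp}$ is homotopy equivalent to ${\rm U}(s)\hookrightarrow{\rm SO}(2s)\to{\rm SO}(2s)/{\rm U}(s)$, and the normalization of the congruence class of $\boldsymbol{w}$ becomes the orthogonal complex structure $w_{1}\colon M\to{\rm SO}(2s)/{\rm U}(s)$. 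As the lifting problem is preserved up to homotopy, a decomposition exists if and only if $w_{1}$ lifts to ${\rm SO}(2s)$, which is $ii)$. I expect the main obstacle to be exactly this last reduction: verifying that the deformation retraction of ${\rm Sp}$ onto ${\rm U}(s)$ is the restriction of that of ${\rm GL}^{+}$ onto ${\rm SO}(2s)$ so the two lifting problems genuinely coincide, that a homotopy-theoretic lift of $w_{1}$ can be promoted to a smooth congruence-valued lift yielding honest smooth one-forms $\boldsymbol{\rho}_{j}$, and that the orientation bookkeeping—the reason ${\rm SO}/{\rm U}$ rather than ${\rm O}/{\rm U}$ appears—is controlled by the connectedness of $M$.
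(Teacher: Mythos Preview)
The paper does not prove this theorem; it is quoted as a result of Dibag~\cite{Dibag} and then only applied in the special case $s=1$, where ${\rm SO}(2)/{\rm U}(1)$ is a single point and condition~$ii)$ is automatic. There is therefore no proof in the paper to compare your proposal against.

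That said, your outline is a faithful reconstruction of the argument in~\cite{Dibag}: the necessity of~$i)$ via the coframe furnished by the decomposing one-forms, the reformulation of a global decomposition as a lift through ${\rm GL}(2s,\mathbb{R})\to{\rm GL}(2s,\mathbb{R})/{\rm Sp}(2s,\mathbb{R})$, and the passage to the maximal compact pair ${\rm U}(s)\subset{\rm SO}(2s)$ via the polar-decomposition retraction are exactly the steps taken there. The technical worries you flag---that the retraction of ${\rm Sp}(2s,\mathbb{R})$ onto ${\rm U}(s)$ is compatible with that of ${\rm GL}^{+}(2s,\mathbb{R})$ onto ${\rm SO}(2s)$, that a homotopy lift can be smoothed to a genuine ${\rm GL}$-valued congruence, and the orientation bookkeeping coming from the connectedness of~$M$---are precisely the points that need care in Dibag's proof, so your self-assessment of the obstacles is accurate.
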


In our case, when $s=1$, since ${\rm U}(1)\cong {\rm SO}(2)$, then ${\rm SO}(2)/{\rm U}(1)$ is a~point and it lifts to~${\rm SO}(2)$ trivially, therefore the second condition in the theorem is satisf\/ied. Hence, the necessary and suf\/f\/icient condition of decomposition is the triviality of $S_{\boldsymbol{w}}\cong Q$. Since $Q$ is a~complex line bundle, it is trivial if and only if $\boldsymbol{c}_{1}(Q)=0$, or equivalently it has a global section. Since the decomposition of the 2-form $\boldsymbol{w}$ into globally def\/ined 1-forms $\boldsymbol{\rho }_{1}$ and $\boldsymbol{\rho }_{2}$ is a necessary condition for the existence of a~global bi-Hamiltonian structure, the vanishing of the f\/irst Chern class of~$Q$ becomes a necessary condition.

However, this may not be suf\/f\/icient since the existence of a decomposition in the form (\ref{e123}) may not imply that the factors $\boldsymbol{\rho }_{i}$ satisfy
\begin{gather*}
\boldsymbol{\rho }_{i}\wedge {\rm d}\boldsymbol{\rho }_{i}=0.
\end{gather*}
In order to determine the ef\/fect of vanishing Chern class on the constructions made so far, we are going to investigate the equation~(\ref{e20}) def\/ining the Poisson one-forms. Since our Poisson one-forms and related integrability conditions are determined by the local solutions of~(\ref{e20}), they are def\/ined locally on each chart. Let $\big\{ J_{i}^{p}\big\} $ and $\big\{ J_{i}^{q}\big\} $ be the Poisson vector f\/ields in charts $( U_{p},x _{p}) $ and $ ( U_{q},x _{q}) $ around points $p\in M$ and $q\in M$, respectively. Around the point $p\in M$, the Poisson vectors $\big\{ J_{i}^{p}\big\} $ are determined by $\mu _{i}^{p},\alpha _{i}^{p}$ and the local frame $\big\{ \widehat{e}_{2}^{p}, \widehat{e}_{3}^{p}\big\} $. Given the local frame, we can write~(\ref{e20}) whose solutions are $\mu _{i}^{p}$'s, and using $\mu _{i}^{p}$'s we can determine $\alpha _{i}^{p}$'s by the equation~(\ref{e31}). Now, if $\boldsymbol{c}_{1}(Q) =0$, which is a necessary condition for the existence of global bi-Hamiltonian structure, then we have a global section of~$Q$, i.e., global vector f\/ields normal to $v$. By using the metric on $M$, normalize this global section of $Q$ and take it as $\widehat{e}_{2}$, then def\/ine $\widehat{e}_{3}=\widehat{e}_{1}\times \widehat{e}_{2}$. So we have the global orthonormal frame f\/ield $\{ \widehat{e}_{1},\widehat{e}_{2},\widehat{e}_{3}\} $. In order to understand the relation between local Poisson one-forms obtained in two dif\/ferent coordinate neighborhoods, we f\/irst need the following lemmas:

\begin{Lemma}\label{lemma3}
If two solutions $\mu _{1}(s) $ and $\mu _{2}(s) $ of the Riccati equation
\begin{gather*}
\frac{{\rm d}\mu _{i}}{{\rm d}s}=-C_{31}^{2}-\mu _{i}\big( C_{31}^{3}+C_{12}^{2}\big) -\mu _{i}^{2}C_{12}^{3}
\end{gather*}
are known, then the general solution $\mu (s) $ is given by
\begin{gather*}
\mu -\mu _{1}=K ( \mu -\mu _{2} ) e^{\int C_{12}^{3}(\mu _{2}-\mu _{1}) {\rm d}s},
\end{gather*}
where $K$ is an arbitrary constant~{\rm \cite{Ince}}.
\end{Lemma}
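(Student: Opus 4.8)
The plan is to reduce the claim to the classical fact that the cross-ratio of four solutions of a Riccati equation is constant, specialized to the case where two of the ``solutions'' are $\mu_1$ and $\mu_2$ and a third is the generic solution $\mu$. The key structural observation is that the difference of two solutions of a Riccati equation satisfies a computable first-order linear equation. First I would let $\mu$ be any solution and $\mu_1$, $\mu_2$ the two given solutions, and subtract the Riccati equation written for $\mu$ from the one written for $\mu_1$. Writing the right-hand side as a quadratic $f(\mu)=-C_{31}^2-\mu\big(C_{31}^3+C_{12}^2\big)-\mu^2 C_{12}^3$, the difference $f(\mu)-f(\mu_1)$ factors, since the linear and quadratic terms give
\begin{gather*}
f(\mu)-f(\mu_1)=-\big(C_{31}^3+C_{12}^2\big)(\mu-\mu_1)-C_{12}^3\big(\mu^2-\mu_1^2\big)
=-(\mu-\mu_1)\Big(C_{31}^3+C_{12}^2+C_{12}^3(\mu+\mu_1)\Big).
\end{gather*}
Hence $w_1:=\mu-\mu_1$ obeys $\dfrac{{\rm d}w_1}{{\rm d}s}=-w_1\big(C_{31}^3+C_{12}^2+C_{12}^3(\mu+\mu_1)\big)$, and an identical computation gives the analogous equation for $w_2:=\mu-\mu_2$.

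Next I would form the logarithmic derivative of the ratio $w_1/w_2=(\mu-\mu_1)/(\mu-\mu_2)$. Subtracting the two linear equations, the common terms $C_{31}^3+C_{12}^2$ cancel and the remaining contribution is
\begin{gather*}
\frac{{\rm d}}{{\rm d}s}\ln\frac{\mu-\mu_1}{\mu-\mu_2}
=-C_{12}^3\big(\mu+\mu_1\big)+C_{12}^3\big(\mu+\mu_2\big)
=C_{12}^3\big(\mu_2-\mu_1\big).
\end{gather*}
Crucially, the unknown $\mu$ has dropped out of the right-hand side entirely, leaving a quantity that depends only on the two prescribed solutions. Integrating along the characteristic in the arclength variable $s$ then yields
\begin{gather*}
\ln\frac{\mu-\mu_1}{\mu-\mu_2}=\int C_{12}^3(\mu_2-\mu_1)\,{\rm d}s+\text{const},
\end{gather*}
and exponentiating gives $\mu-\mu_1=K(\mu-\mu_2)\,e^{\int C_{12}^3(\mu_2-\mu_1)\,{\rm d}s}$ with $K$ an arbitrary integration constant, which is exactly the asserted formula.

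The only point requiring care—and the closest thing to an obstacle—is the cancellation of the $\mu$-dependence in the logarithmic-derivative step; this is precisely the structural feature that makes a Riccati equation special and is what guarantees that the stated one-parameter family, indexed by $K$, is in fact the \emph{general} solution rather than just some solutions. I would finish by remarking that since $f$ is a genuine quadratic (i.e.\ $C_{12}^3\not\equiv 0$ on the relevant region, which is the nondegenerate case underlying the two independent Poisson vector fields), $\mu_1$ and $\mu_2$ are distinct solutions and the formula recovers every solution as $K$ ranges over the reals (with $K=0$ and the limit $K\to\infty$ returning $\mu_1$ and $\mu_2$ themselves). This is the content of the cited classical result, so the computation above may simply be presented as the verification specialized to our coefficients.
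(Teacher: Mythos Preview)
Your argument is correct and is exactly the classical derivation one finds in Ince: subtract the Riccati equation for $\mu$ from that for each known solution, note that the differences $w_i=\mu-\mu_i$ satisfy linear first-order equations, and observe that in the logarithmic derivative of $w_1/w_2$ the unknown $\mu$ cancels, leaving $C_{12}^{3}(\mu_2-\mu_1)$ to be integrated. The paper itself offers no proof of this lemma; it is simply stated with a reference to Ince, so there is nothing further to compare. (Incidentally, the same subtraction trick you use is carried out by the paper elsewhere, in equation~(\ref{e48}), for the pair $\mu_1,\mu_2$.) Your closing caveat about $C_{12}^{3}\not\equiv 0$ is unnecessary for the lemma as stated: when $C_{12}^{3}\equiv 0$ the exponential factor is $1$ and the formula still parametrizes all solutions of the resulting linear equation, so you can drop that remark.
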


\begin{Lemma}\label{lemma4}
If $\boldsymbol{c}_{1}(Q) =0,$ then two pairs of compatible Poisson vector
fields $\big\{ J_{i}^{p}\big\} $ and $\big\{ J_{i}^{q}\big\} $ on $U_{p}$ and $U_{q}$ respectively, are related on $U_{p}\cap U_{q}$ by
\begin{gather*}
\frac{J_{i}^{q}}{\big\Vert J_{i}^{q}\big\Vert }=\frac{J_{i}^{p}}{\big\Vert J_{i}^{p}\big\Vert }.
\end{gather*}
\end{Lemma}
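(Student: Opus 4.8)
The plan is to show that once the first Chern class of $Q$ vanishes, so that we may fix a \emph{global} orthonormal frame $\{\widehat{e}_1,\widehat{e}_2,\widehat{e}_3\}$ with $\widehat{e}_1 = v/\Vert v\Vert$, the normalized Poisson vector fields glue across chart overlaps. The key observation is that with a global frame the structure functions $C_{ij}^k$ appearing in the Riccati equation~\eqref{e21} become globally defined on $M$, so the right-hand side of~\eqref{e21} is the \emph{same} in $U_p$ and $U_q$. Hence on the overlap $U_p \cap U_q$ the local solutions $\mu_i^p$ and $\mu_i^q$ satisfy one and the same Riccati ODE along each integral curve of $v$.

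First I would use Lemma~\ref{lemma3}: since $\mu_1^p,\mu_2^p$ and $\mu_1^q,\mu_2^q$ all solve the \emph{same} Riccati equation, the general-solution formula expresses any solution in terms of two fixed ones via a single arbitrary constant $K$ (constant along each characteristic, i.e.\ satisfying $\widehat{e}_1\cdot\nabla K=0$). The goal is to argue that $\mu_i^q = \mu_i^p$ on the overlap, which forces the \emph{directions} of $J_i^q$ and $J_i^p$ to coincide, since by~\eqref{e22} the direction of $J_i$ is $\widehat{e}_2 + \mu_i\widehat{e}_3$, now written in the common global frame. The point is that $\mu_i$ is a genuinely geometric quantity --- it is the slope, in the fixed frame $\{\widehat{e}_2,\widehat{e}_3\}$, of the plane field spanned by the Poisson vector, and this plane field is determined by the Jacobi identity together with a choice that is made consistently on all charts (for instance, selecting the two distinguished solutions of the Riccati equation at a basepoint on each integral curve). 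Because integral curves of $v$ foliate the overlap and each curve meets both charts, matching the solutions along the foliation gives $\mu_i^p=\mu_i^q$.

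Concretely, I would track the identity $\mu - \mu_1 = K(\mu-\mu_2)\,e^{\int C_{12}^3(\mu_2-\mu_1)\,{\rm d}s}$ from Lemma~\ref{lemma3} and use it to compare the two parametrizations of the solution space on $U_p\cap U_q$. The two pairs $\{\mu_1^p,\mu_2^p\}$ and $\{\mu_1^q,\mu_2^q\}$ are two bases of the same one-parameter family, related by a M\"obius transformation with coefficients constant along characteristics. Once the two distinguished solutions are pinned down by the same prescription (the two linearly independent solutions of the associated linear second-order equation, normalized identically), this M\"obius transformation reduces to the identity and $\mu_i^q=\mu_i^p$ follows. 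Then
\begin{gather*}
\frac{J_i^q}{\Vert J_i^q\Vert} = \frac{\widehat{e}_2 + \mu_i^q\,\widehat{e}_3}{\sqrt{1+(\mu_i^q)^2}} = \frac{\widehat{e}_2 + \mu_i^p\,\widehat{e}_3}{\sqrt{1+(\mu_i^p)^2}} = \frac{J_i^p}{\Vert J_i^p\Vert},
\end{gather*}
since the $\alpha_i$ scaling factors cancel upon normalization.

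The main obstacle I anticipate is justifying that the two local families of Poisson directions are indexed by the \emph{same} Riccati equation and the \emph{same} choice of distinguished solutions, i.e.\ that passing to the global frame removes all chart-dependence from~\eqref{e21}. This requires care: the local frames $\{\widehat{e}_2^p,\widehat{e}_3^p\}$ and $\{\widehat{e}_2^q,\widehat{e}_3^q\}$ originally differed by an $\mathrm{SO}(2)$ rotation in each fiber of $Q$, which is precisely what would alter the structure functions $C_{ij}^k$ and hence the Riccati coefficients. The hypothesis $\boldsymbol{c}_1(Q)=0$ is exactly what lets us \emph{eliminate} this rotational ambiguity by choosing one global section of $Q$ as $\widehat{e}_2$; I would emphasize that it is the triviality of $Q$, not merely the local solvability of~\eqref{e21}, that makes the identification of the two Riccati problems --- and therefore the equality of normalized Poisson vectors --- globally consistent.
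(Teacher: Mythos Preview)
Your setup is correct up to the point where you observe that, with the global frame supplied by $\boldsymbol{c}_1(Q)=0$, all four functions $\mu_1^p,\mu_2^p,\mu_1^q,\mu_2^q$ solve the \emph{same} Riccati equation on $U_p\cap U_q$, and that the two pairs are related by a M\"obius transformation with coefficients constant along characteristics. The gap is your next step: you assert that this M\"obius transformation ``reduces to the identity'' once the distinguished solutions are ``pinned down by the same prescription.'' No such prescription is available. The local solutions $\mu_i^p$ and $\mu_i^q$ were obtained by solving~\eqref{e21} independently on each chart with independently chosen initial data; there is no canonical normalization of the two solutions of the associated second-order linear ODE that forces $\mu_i^p=\mu_i^q$ on the overlap. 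In general $\mu_i^p\neq\mu_i^q$, so the directions of $J_i^p$ and $J_i^q$ genuinely differ, and your final displayed equality is unjustified.

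The paper does not try to prove $\mu_i^p=\mu_i^q$; it instead \emph{absorbs} the M\"obius transformation into a redefinition of the Poisson vectors on one chart. From Lemma~\ref{lemma3} one has $\mu_i^q-\mu_1^p=K_i^{pq}(\mu_i^q-\mu_2^p)\,e^{\int C_{12}^3(\mu_2^p-\mu_1^p)\,{\rm d}s}$; the compatibility condition~\eqref{e23} identifies the exponential factor with $\alpha_2^p/\alpha_1^p$, and after clearing denominators this becomes the cross-product relation $J_i^q\times(J_1^p-K_i^{pq}J_2^p)=0$. Since $\widehat{e}_1\cdot\nabla K_i^{pq}=0$, the combination $\widetilde{J}_i^p:=J_1^p-K_i^{pq}J_2^p$ is again a compatible Poisson vector on $U_p$, and it is \emph{this} vector that is parallel to $J_i^q$. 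The lemma thus holds only after replacing $\{J_i^p\}$ by $\{\widetilde{J}_i^p\}$, a step your argument omits. Note that the compatibility hypothesis is used essentially here, to rewrite the exponential factor and to guarantee that the new combination remains a Poisson vector; it is not merely background for setting up the Riccati equation.
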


\begin{proof}Given the global frame f\/ield $\{ \widehat{e}_{2},\widehat{e}_{3}\} $ def\/ined on coordinate neighbor\-hoods~$U_{p}$ and~$U_{q}$, Riccati equations for~$\mu _{i}$'s can be written as
\begin{gather*}
\widehat{e}_{1}\cdot \nabla \mu _{i}^{r}=( \nabla \times \widehat{e}_{2}) \cdot \widehat{e}_{2}+\mu _{i}^{r}\big(( \nabla \times \widehat{e}_{2}) \cdot \widehat{e}_{3}+( \nabla \times \widehat{e}_{3}) \cdot \widehat{e}_{2}\big) +\big( \mu_{i}^{r}\big)^{2} \big(\nabla \times \widehat{e}_{3} \big) \cdot
\widehat{e}_{3}
\end{gather*}
for $r=p,q$. Therefore, on $U_{p}\cap U_{q}$, $\mu _{i}^{p}$ and $\mu_{i}^{q} $ are four solutions of the same Riccati equation for $i=1,2$. By the lemma above we have
\begin{gather}
\mu _{i}^{q}-\mu _{1}^{p}=K_{i}^{pq}\big( \mu _{i}^{q}-\mu _{2}^{p}\big) e^{\int C_{12}^{3}\big( \mu _{2}^{p}-\mu _{1}^{p}\big) {\rm d}s}. \label{e130}
\end{gather}
Now, using the compatibility condition (\ref{e23}),
\begin{gather*}
C_{12}^{3}\big( \mu _{2}^{p}-\mu _{1}^{p}\big) =\widehat{e}_{1}\cdot\nabla \ln \frac{\alpha _{2}^{p}}{\alpha _{1}^{p}},
\end{gather*}
(\ref{e130}) becomes
\begin{gather}
\mu _{i}^{q}-\mu _{1}^{p}=K_{i}^{pq}\big( \mu _{i}^{q}-\mu _{2}^{p}\big) \frac{\alpha _{2}^{p}}{\alpha _{1}^{p}}, \label{e132}
\end{gather}
where
\begin{gather}
\widehat{e}_{1}\cdot \nabla K_{i}^{pq}=0. \label{e133}
\end{gather}
Multiplying both sides by $\alpha _{1}^{p}\alpha _{i}^{q}$ in (\ref{e132}), gives
\begin{gather}
J_{i}^{q}\times J_{1}^{p}=K_{i}^{pq}J_{i}^{q}\times J_{2}^{p}. \label{e135}
\end{gather}
Rearranging (\ref{e135}), we obtain
\begin{gather*}
J_{i}^{q}\times \big( J_{1}^{p}-K_{i}^{pq}J_{2}^{p}\big) =0.
\end{gather*}
Using (\ref{e133}) and the compatibility, we can take
\begin{gather*}
\widetilde{J}_{i}^{p}=J_{1}^{p}-K_{i}^{pq}J_{2}^{p}
\end{gather*}
to be our new Poisson vector f\/ields on the neighborhood $U_{p}$, and obtain
\begin{gather*}
J_{i}^{q}\times \widetilde{J}_{i}^{p}=0.
\end{gather*}
By compatibility these new Poisson vector f\/ields $\widetilde{J}_{i}^{p}$ produce functionally dependent Hamilto\-nians and therefore, for the simplicity of notation, we will assume without restriction of generality that
\begin{gather*}
\widetilde{J}_{i}^{p}=J_{i}^{p}
\end{gather*}
and the lemma follows.
\end{proof}

Then, we have the following result:

\begin{Theorem}\label{theorem3}
There exist two linearly independent global sections $\widehat{j}_{i}$ of $Q$ satisfying
\begin{gather}
\widehat{j}_{i}\cdot \big( \nabla \times \widehat{j}_{i}\big) =0\label{e140}
\end{gather}
if and only if $\boldsymbol{c}_{1}(Q) =0$.
\end{Theorem}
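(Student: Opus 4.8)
The plan is to prove the two implications separately, treating the forward direction (existence of two global sections $\widehat{j}_i$ satisfying \eqref{e140} implies $\boldsymbol{c}_1(Q)=0$) as the easy half and the converse as the substantive half. For the forward direction, I would observe that a single nonvanishing global section of the complex line bundle $Q$ already forces $\boldsymbol{c}_1(Q)=0$, since a complex line bundle is trivial precisely when it admits a nowhere-vanishing section. Thus even one of the two $\widehat{j}_i$ suffices: normalizing it and using the complex structure $\Lambda$ (cross product with $\widehat{e}_1$) to rotate it produces a global frame of $Q$, so $Q$ is trivial and its first Chern class vanishes. The condition \eqref{e140} is not even needed here; it is the Poisson (Jacobi) condition that becomes essential only in the converse.

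For the converse, I would start from the hypothesis $\boldsymbol{c}_1(Q)=0$ and use it exactly as in the discussion preceding the theorem: triviality of $Q$ yields a global nonvanishing normal section, which after normalization I take as $\widehat{e}_2$, setting $\widehat{e}_3=\widehat{e}_1\times\widehat{e}_2$, so that $\{\widehat{e}_1,\widehat{e}_2,\widehat{e}_3\}$ is a \emph{global} orthonormal frame. Relative to this global frame the structure functions $C_{ij}^k$ are globally defined, and Proposition~\ref{proposition1} produces, on each chart $U_r$, local Poisson vector fields $J_i^r=\alpha_i^r(\widehat{e}_2+\mu_i^r\widehat{e}_3)$ solving the Riccati equation \eqref{e20}. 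The task is then to patch these local solutions into two genuinely global sections. This is precisely the content of Lemma~\ref{lemma4}: with the global frame in force, the $\mu_i^p$ and $\mu_i^q$ on an overlap $U_p\cap U_q$ are solutions of one and the same Riccati equation, and after the change of Poisson fields described there one obtains $J_i^q/\|J_i^q\|=J_i^p/\|J_i^p\|$ on overlaps. Hence the normalized fields $\widehat{j}_i:=J_i/\|J_i\|$ agree on all overlaps and glue to two globally defined sections of $Q$.

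It remains to check that the glued sections $\widehat{j}_i$ are (i) linearly independent and (ii) satisfy \eqref{e140}. Linear independence is inherited from the local picture: by \eqref{e11} the two Poisson vectors are nowhere multiples of one another, equivalently $\mu_1^r\neq\mu_2^r$ at every point, and this pointwise condition is preserved under normalization and gluing. For the Jacobi condition, I note that $\widehat{j}_i$ is a positive rescaling of $J_i$, and the Jacobi identity \eqref{e9} is invariant under such rescaling: if $J\cdot(\nabla\times J)=0$ then for any positive function $f$ one computes $fJ\cdot(\nabla\times fJ)=f^2\,J\cdot(\nabla\times J)+f(\nabla f\times J)\cdot J=0$, the second term vanishing because $\nabla f\times J$ is orthogonal to $J$. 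Since each $J_i^r$ satisfies \eqref{e9} locally, so does $\widehat{j}_i$, and this persists after gluing because $\widehat{j}_i$ is globally well defined. This establishes \eqref{e140} and completes the converse.

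The main obstacle is the gluing step, and the work there is already isolated in Lemma~\ref{lemma4}; the crucial input is that passing to the \emph{global} frame $\{\widehat{e}_2,\widehat{e}_3\}$ makes the $C_{ij}^k$ globally defined, so that the two coordinate patches really present solutions of the \emph{same} Riccati equation, allowing the cross-ratio relation of Lemma~\ref{lemma3} to identify the normalized solutions on overlaps. The delicate point to state carefully is that the ambiguity in the local construction (the arbitrary scaling $\alpha_i$ and the freedom to replace $J_i^p$ by the combination $\widetilde{J}_i^p$ of Lemma~\ref{lemma4}) is exactly what is quotiented out by normalization, so that the transition data become trivial and the sections descend to $Q$.
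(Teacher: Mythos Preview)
Your proposal is correct and follows essentially the same approach as the paper: the forward direction is the triviality of a complex line bundle with a nonvanishing section, and the converse defines $\widehat{j}_i^{\,p}=J_i^p/\|J_i^p\|$ and invokes Lemma~\ref{lemma4} to conclude that these normalized local fields agree on overlaps. You are in fact more explicit than the paper in verifying that linear independence and the Jacobi identity \eqref{e9} survive normalization, points the paper leaves tacit.
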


\begin{proof}
The forward part is trivial since the existence of a global section of the complex line bundle $Q$ implies that $Q$ is trivial, and hence $\boldsymbol{c}_{1}(Q) $ vanishes. For the converse, we def\/ine
\begin{gather*}
\widehat{j}_{i}^{p}=\frac{J_{i}^{p}}{\big\Vert J_{i}^{p}\big\Vert }
\end{gather*}
and the lemma implies that $j_{i}^{p}=j_{i}^{q}$ on $U_{p}\cap U_{q}$ and the theorem follows.
\end{proof}

The lemma above states the reason why one may fail to extend a local pair of compatible Poisson vector f\/ields into a global one, even if $\boldsymbol{c}_{1}(Q) =0$. In order to do so one should have $J_{i}^{q}=J_{i}^{p}$ on $U_{p}\cap U_{q}$. However, not the Poisson vector f\/ields but their unit vector f\/ields can be globalized. Since
\begin{gather*}
\widehat{e}_{1}\cdot \nabla \frac{\big\Vert J_{2}^{p}\big\Vert }{\big\Vert J_{1}^{p}\big\Vert }\neq 0 
\end{gather*}
in general, they may not lead to a pair of compatible Poisson structures. Now we take $\widehat{j}_{1}$ as our f\/irst global Poisson vector f\/ield, and check whether we can f\/ind another global Poisson vector f\/ield compatible with this one by rescaling~$\widehat{j}_{2}$.

\subsection{Second obstruction: Bott class of the complex codimension 1 foliation}\label{section3.3}

Since $v$ is a nonvanishing vector f\/ield on $M$, it def\/ines a real codimension two foliation on~$M$ by orbits of~$v$. Since $Q=TM/E$ is a~complex line bundle on $M$, this foliation has complex codimension one. Now, by assuming our primary obstruction which is the vanishing of the Chern class, we compute the Bott class of the complex codimension one foliation as def\/ined in~\cite{Bott}, which is studied in detail in~\cite{Taro}, and then show that the system admits two globally def\/ined compatible Poisson structures if and only if the Bott Class is trivial.

For the rest of our work, we will assume that $Q$ and its dual $Q^{\ast }$ are trivial bundles. By~(\ref{e140}), $Q^{\ast }$~has two global sections $\widehat{\boldsymbol{j}}_{i} = ( ^{\ast }\imath _{\widehat{j}_{i}}\boldsymbol{\Omega } )$ satisfying
\begin{gather}
{\rm d}\widehat{\boldsymbol{j}}_{i}=\boldsymbol{\Gamma }_{i}\wedge \widehat{\boldsymbol{j}}_{i}\label{e143}
\end{gather}
for globally def\/ined $\boldsymbol{\Gamma }_{i}$'s. These $\widehat{\boldsymbol{j}}_{i}$'s are related with the local Poisson one-forms $\boldsymbol{J}_{i}^{p}$ by
\begin{gather}
\boldsymbol{J}_{i}^{p}=\big\Vert \boldsymbol{J}_{i}^{p}\big\Vert \widehat{\boldsymbol{j}}_{i}. \label{e144}
\end{gather}
By (\ref{e118}), we have
\begin{gather}
{\rm d}\boldsymbol{J}_{i}^{p}=\boldsymbol{\gamma }^{p}\wedge \boldsymbol{J}_{i}^{p}.\label{e145}
\end{gather}
Inserting (\ref{e144}) and (\ref{e145}) into (\ref{e143}), we also have
\begin{gather}
{\rm d}\widehat{\boldsymbol{j}}_{i}=\big( \boldsymbol{\gamma }^{p}-{\rm d}\ln \big\Vert \boldsymbol{J}_{i}^{p}\big\Vert \big) \wedge \widehat{\boldsymbol{j}}_{i}. \label{e146}
\end{gather}
Redef\/ining $\boldsymbol{\Gamma} _{i}$'s if necessary, comparing (\ref{e143}) with (\ref{e146}), we get
\begin{gather}
\boldsymbol{\Gamma }_{i}=\boldsymbol{\gamma }^{p}-{\rm d}\ln \big\Vert \boldsymbol{J}_{i}^{p}\big\Vert. \label{e147}
\end{gather}

\begin{Proposition}\label{proposition6}
Let $\boldsymbol{\kappa }$ be the curvature two-form of $Q$. There exists a~compatible pair of global Poisson structures if and only if
\begin{gather*}
\boldsymbol{\Xi }= ( \boldsymbol{\Gamma }_{1}-\boldsymbol{\Gamma }_{2} ) \wedge \boldsymbol{\kappa } \label{e148}
\end{gather*}
is exact.
\end{Proposition}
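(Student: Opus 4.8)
The plan is to reduce the existence of a global compatible pair to a single scalar equation for a rescaling function, and then to read that equation cohomologically through the curvature $\boldsymbol{\kappa}$. First I would note that the two global sections $\widehat{\boldsymbol{j}}_{i}$ of $Q^{\ast}$ are \emph{already} global Poisson one-forms: the relation ${\rm d}\widehat{\boldsymbol{j}}_{i}=\boldsymbol{\Gamma}_{i}\wedge\widehat{\boldsymbol{j}}_{i}$ of \eqref{e143} gives at once $\widehat{\boldsymbol{j}}_{i}\wedge{\rm d}\widehat{\boldsymbol{j}}_{i}=0$, so each satisfies the Jacobi identity \eqref{e84}. Hence the only thing that can fail globally is compatibility, and the remaining freedom is a rescaling: I would look for nowhere-vanishing global functions $f_{i}$ so that $\boldsymbol{K}_{i}=f_{i}\widehat{\boldsymbol{j}}_{i}$ form a compatible pair.

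Next I would substitute $\boldsymbol{K}_{i}=f_{i}\widehat{\boldsymbol{j}}_{i}$ into the compatibility condition $\boldsymbol{K}_{1}\wedge{\rm d}\boldsymbol{K}_{2}+\boldsymbol{K}_{2}\wedge{\rm d}\boldsymbol{K}_{1}=0$. Since ${\rm d}\boldsymbol{K}_{i}=(\boldsymbol{\Gamma}_{i}+{\rm d}\ln f_{i})\wedge\boldsymbol{K}_{i}$, collecting terms exactly as in Proposition~\ref{proposition5} collapses the condition to $\widehat{\boldsymbol{j}}_{1}\wedge\widehat{\boldsymbol{j}}_{2}\wedge\big(\boldsymbol{\Gamma}_{1}-\boldsymbol{\Gamma}_{2}+{\rm d}\ln(f_{1}/f_{2})\big)=0$. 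Writing $g=\ln(f_{1}/f_{2})$ and $\boldsymbol{\Delta}=\boldsymbol{\Gamma}_{1}-\boldsymbol{\Gamma}_{2}$, and using that $\widehat{\boldsymbol{j}}_{1}\wedge\widehat{\boldsymbol{j}}_{2}$ is a nonvanishing multiple of $\iota_{\widehat{e}_{1}}\boldsymbol{\Omega}$, this is equivalent to $\iota_{\widehat{e}_{1}}(\boldsymbol{\Delta}+{\rm d}g)=0$. Because ${\rm d}\boldsymbol{\Gamma}_{1}={\rm d}\boldsymbol{\Gamma}_{2}=\boldsymbol{\kappa}$ by \eqref{e147}, the one-form $\boldsymbol{\Delta}$ is closed, and the upshot is clean: a global compatible pair exists if and only if the closed one-form $\boldsymbol{\Delta}$ is cohomologous to a representative $\boldsymbol{\Delta}+{\rm d}g$ tangent to the foliation, i.e.\ the transport equation $\widehat{e}_{1}\cdot\nabla g=-\iota_{\widehat{e}_{1}}\boldsymbol{\Delta}$ admits a global solution $g$.

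I would then recast this solvability as the exactness of $\boldsymbol{\Xi}=\boldsymbol{\Delta}\wedge\boldsymbol{\kappa}$. For necessity, given such a $g$ set $\boldsymbol{\sigma}=\boldsymbol{\Delta}+{\rm d}g$; it is closed and annihilates $\widehat{e}_{1}$, and $\boldsymbol{\Xi}=\boldsymbol{\sigma}\wedge\boldsymbol{\kappa}-{\rm d}(g\,\boldsymbol{\kappa})$, so $[\boldsymbol{\Xi}]=[\boldsymbol{\sigma}\wedge\boldsymbol{\kappa}]$; the tangency of $\boldsymbol{\sigma}$ is precisely what lets the leafwise primitive $\boldsymbol{\gamma}$ of $\boldsymbol{\kappa}$ (the connection with $\iota_{\widehat{e}_{1}}\boldsymbol{\gamma}=0$ from \eqref{e117}) be assembled into a global primitive of $\boldsymbol{\sigma}\wedge\boldsymbol{\kappa}$, forcing $[\boldsymbol{\Xi}]=0$. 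For sufficiency I would start from a two-form $\boldsymbol{\lambda}$ with $\boldsymbol{\Xi}={\rm d}\boldsymbol{\lambda}$ and reconstruct $g$ by integrating the transport equation along the orbits of $v$, the given primitive being what guarantees that the leafwise integration closes up consistently in the transverse directions.

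The main obstacle is this converse (sufficiency) step. Integrating $\widehat{e}_{1}\cdot\nabla g=-\iota_{\widehat{e}_{1}}\boldsymbol{\Delta}$ along a single orbit is immediate, but producing one globally smooth $g$ requires controlling the holonomy of the complex codimension one foliation, and it is exactly the curvature $\boldsymbol{\kappa}$ — entering through the Bott connection $\boldsymbol{\gamma}$ with $\iota_{\widehat{e}_{1}}\boldsymbol{\gamma}=0$ — that packages this obstruction into a three-form. The delicate point will be to show that de Rham exactness of $\boldsymbol{\Xi}$ is not merely necessary but genuinely sufficient to globalize the rescaling: one must split the primitive $\boldsymbol{\lambda}$ into its leafwise and transverse parts, solve the leafwise piece by a Poincar\'e-lemma argument along the orbits, and rule out monodromy around recurrent orbits. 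This is also the step whose output will let one identify $[\boldsymbol{\Xi}]$ with the Bott class in Theorem~\ref{theorem4}.
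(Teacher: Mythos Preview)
Your reduction to a rescaling and the derivation of the compatibility equation
$(\boldsymbol{\Gamma}_{1}-\boldsymbol{\Gamma}_{2}+{\rm d}g)\wedge\widehat{\boldsymbol{j}}_{1}\wedge\widehat{\boldsymbol{j}}_{2}=0$
match the paper exactly (their \eqref{e153}, with $g=-\ln f^{p}$). The divergence comes right after, and it is the one missing idea that makes your necessity argument vague while the paper's is a one-liner.

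The paper first proves the pointwise identity
\[
\boldsymbol{\kappa}={\rm d}\boldsymbol{\gamma}^{p}=\varphi\,\widehat{\boldsymbol{j}}_{1}\wedge\widehat{\boldsymbol{j}}_{2}
\]
(their \eqref{e156}); it follows by differentiating ${\rm d}\boldsymbol{J}_{i}^{p}=\boldsymbol{\gamma}^{p}\wedge\boldsymbol{J}_{i}^{p}$ to get $\boldsymbol{\kappa}\wedge\widehat{\boldsymbol{j}}_{i}=0$ for both $i$, which in three dimensions forces $\boldsymbol{\kappa}$ to be a multiple of $\widehat{\boldsymbol{j}}_{1}\wedge\widehat{\boldsymbol{j}}_{2}$. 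Once you have this, you simply multiply the compatibility equation by $\varphi$ and obtain
\[
\boldsymbol{\Xi}=(\boldsymbol{\Gamma}_{1}-\boldsymbol{\Gamma}_{2})\wedge\boldsymbol{\kappa}=-{\rm d}g\wedge\boldsymbol{\kappa}={\rm d}\bigl((-g)\,\boldsymbol{\kappa}\bigr),
\]
which is the whole of the paper's necessity argument. In your language, \eqref{e156} gives $\iota_{\widehat{e}_{1}}\boldsymbol{\kappa}=0$, and then your $\boldsymbol{\sigma}\wedge\boldsymbol{\kappa}$ is not merely exact but \emph{identically zero}: both factors annihilate $\widehat{e}_{1}$, so their wedge is a $3$-form on a $3$-manifold with a vector in its kernel. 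Your proposed route, ``assembling a global primitive'' of $\boldsymbol{\sigma}\wedge\boldsymbol{\kappa}$ from the local $\boldsymbol{\gamma}^{p}$, cannot work as stated because those $\boldsymbol{\gamma}^{p}$ are only chart-wise defined; the identity \eqref{e156} is what removes the need for any such patching.

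On the sufficiency direction you are candid that it is the main obstacle, and you are right that it is: passing from an arbitrary primitive $\boldsymbol{\lambda}$ of $\boldsymbol{\Xi}$ back to a global $g$ solving the transport equation is not automatic. The paper's own proof is equally terse here (``and the proposition follows''), effectively reading the equivalence through the identity $\boldsymbol{\Xi}={\rm d}((\ln f^{p})\boldsymbol{\kappa})$ without spelling out how exactness of $\boldsymbol{\Xi}$ furnishes a global $f$; the substantive globalization argument is deferred to the Bott-class discussion in Theorem~\ref{theorem4}. So your instinct to flag this step is sound, but the concrete tool you are lacking to close the necessity side cleanly is precisely \eqref{e156}.
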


\begin{proof}
Since $\widehat{\boldsymbol{j}}_{1}$ and $\widehat{\boldsymbol{j}}_{2}$ may not be compatible, we introduce a local Poisson form $\boldsymbol{j}^{p}$ def\/ined on the coordinate neighborhood $U_{p}$ of $p\in M$, which is compatible with $\widehat{\boldsymbol{j}}_{1}$ and parallel to $\widehat{\boldsymbol{j}}_{2}$ i.e.,
\begin{gather}
\boldsymbol{j}^{p}=f^{p}\widehat{\boldsymbol{j}}_{2} \label{e149}
\end{gather}
and
\begin{gather}
\widehat{\boldsymbol{j}}_{1}\wedge {\rm d}\boldsymbol{j}^{p}+\boldsymbol{j}^{p}\wedge {\rm d}\widehat{\boldsymbol{j}}_{1}=0. \label{e150}
\end{gather}
Now (\ref{e149}) implies that
\begin{gather}
{\rm d}\boldsymbol{j}^{p}=\big( \boldsymbol{\Gamma }_{2}+{\rm d}\ln f^{p}\big) \wedge \boldsymbol{j}^{p}. \label{e151}
\end{gather}
Putting (\ref{e143}) and (\ref{e151}) into (\ref{e150}) and using (\ref{e149}), we get
\begin{gather*}
\big( \boldsymbol{\Gamma }_{1}-\boldsymbol{\Gamma }_{2}-{\rm d}\ln f^{p}\big) \wedge \widehat{\boldsymbol{j}}_{1}\wedge \boldsymbol{j}^{p}=0
\end{gather*}
which implies
\begin{gather}
 ( \boldsymbol{\Gamma }_{1}-\boldsymbol{\Gamma }_{2} ) \wedge \widehat{\boldsymbol{j}}_{1}\wedge \widehat{\boldsymbol{j}}_{2}={\rm d}\ln f^{p}\wedge \widehat{\boldsymbol{j}}_{1}\wedge \widehat{\boldsymbol{j}}_{2}. \label{e153}
\end{gather}
Our aim here is to f\/ind the obstruction to extending $f^{p}$ to $M$, or for (\ref{e153}) to hold globally. For this purpose, we consider the connections on~$Q$ def\/ined by~$\Gamma _{i}$'s. By (\ref{e147}), we def\/ine the curvature of these connections to be
\begin{gather*}
\boldsymbol{\kappa }={\rm d}\boldsymbol{\Gamma }_{i}={\rm d}\boldsymbol{\gamma }^{p}.
\end{gather*}
Taking the exterior derivative of (\ref{e145}) and using (\ref{e144}), we get
\begin{gather*}
{\rm d}\boldsymbol{\gamma }^{p}\wedge \boldsymbol{J}_{i}^{p}={\rm d}\boldsymbol{\gamma }^{p}\wedge \widehat{\boldsymbol{j}}_{i}=0,
\end{gather*}
which leads to
\begin{gather}
\kappa ={\rm d}\boldsymbol{\gamma }^{p}=\varphi \widehat{\boldsymbol{j}}_{1}\wedge \widehat{\boldsymbol{j}}_{2}. \label{e156}
\end{gather}
Now multiplying both sides of (\ref{e153}) with $\varphi $,
\begin{gather*}
( \boldsymbol{\Gamma }_{1}-\boldsymbol{\Gamma }_{2}) \wedge \boldsymbol{\kappa }={\rm d}\ln f^{p}\wedge \boldsymbol{\kappa }={\rm d}\big( \big( \ln f^{p}\big) \boldsymbol{\kappa }\big)
\end{gather*}
and the proposition follows.
\end{proof}

Now we are going to show that the cohomology class of $\boldsymbol{\Xi }$ vanishes if and only if the Bott class of the complex codimension~1 foliation vanishes. Since $Q$ is a complex line bundle we have
\begin{gather*}
\boldsymbol{c}_{1}(Q) = [ \boldsymbol{\kappa } ]
\end{gather*}
and the vanishing of $\boldsymbol{c}_{1}(Q) $ is a necessary condition
\begin{gather*}
\boldsymbol{c}_{1}={\rm d}\boldsymbol{h}_{1}.
\end{gather*}
So we have
\begin{gather*}
\boldsymbol{c}_{1}= [ \boldsymbol{\kappa } ] =\big[ {\rm d}\boldsymbol{\gamma }^{p}\big],
\end{gather*}
which implies that on $U_{p}$
\begin{gather*}
\boldsymbol{h}_{1}=\boldsymbol{\gamma }^{p}+{\rm d}\ln h^{p}.
\end{gather*}
Then, the Bott class \cite{Bott} becomes
\begin{gather*}
\boldsymbol{h}_{1}\wedge \boldsymbol{c}_{1}=\big( \boldsymbol{\gamma }^{p}+{\rm d}\ln h^{p}\big) \wedge {\rm d}\boldsymbol{\gamma }^{p}={\rm d}\ln h^{p}\wedge \boldsymbol{\kappa }+\boldsymbol{\gamma }^{p}\wedge {\rm d}\boldsymbol{\gamma }^{p}.
\end{gather*}
Now by (\ref{e117}) and (\ref{e156}) we have
\begin{gather*}
\boldsymbol{\gamma }^{p}\wedge {\rm d}\boldsymbol{\gamma }^{p}=0,
\end{gather*}
and therefore,
\begin{gather*}
\boldsymbol{h}_{1}\wedge \boldsymbol{c}_{1}={\rm d}\big( \big( \ln h^{p}\big) \kappa\big).
\end{gather*}
Since $\boldsymbol{h}_{1}$ is globally def\/ined, on $U_{p}\cap U_{q}$ we have
\begin{gather*}
\boldsymbol{h}_{1}=\boldsymbol{\gamma }^{p}+{\rm d}\ln h^{p}=\boldsymbol{\gamma }^{q}+{\rm d}\ln h^{q}
\end{gather*}
and
\begin{gather}
\boldsymbol{\gamma }^{p}-\boldsymbol{\gamma }^{q}={\rm d}\ln \frac{h^{q}}{h^{p}}.
\label{e166}
\end{gather}
Now we have the following theorem:

\begin{Theorem}\label{theorem4}
The cohomology class of $\boldsymbol{\Xi }$ vanishes if and only if the Bott class of the complex codimension one foliation defined by the nonvanishing vector field vanishes.
\end{Theorem}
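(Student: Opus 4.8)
The plan is to exhibit $[\boldsymbol{\Xi}]$ and the Bott class as two secondary cohomology classes in $H^{3}(M)$ that are controlled by the same \v{C}ech obstruction, so that they vanish together. Both are represented by globally closed $3$-forms that are only locally exact: Proposition~\ref{proposition6} together with (\ref{e147}) gives $\boldsymbol{\Xi}=\mathrm{d}\big((\ln f^{p})\boldsymbol{\kappa}\big)$ on each $U_{p}$, while the computation preceding the theorem gives $\boldsymbol{h}_{1}\wedge\boldsymbol{c}_{1}=\mathrm{d}\big((\ln h^{p})\boldsymbol{\kappa}\big)$ on each $U_{p}$. I would fix a good cover $\{U_{p}\}$ and transcribe each form into the \v{C}ech--de Rham double complex, descending from the global $3$-form through its local $2$-form primitives to a \v{C}ech cocycle of locally constant functions; the de Rham class then vanishes exactly when that cocycle is a coboundary. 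The reason neither class is automatically trivial is that this descent only ever produces \emph{locally} constant transition data, which need not be globally a coboundary.

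First I would run the descent for the Bott class. The local primitive is $(\ln h^{p})\boldsymbol{\kappa}$, and on $U_{p}\cap U_{q}$ the two primitives differ by the closed $2$-form $\big(\ln\frac{h^{p}}{h^{q}}\big)\boldsymbol{\kappa}$. Writing $\boldsymbol{\kappa}=\mathrm{d}\boldsymbol{\gamma}^{p}$ and using (\ref{e166}) to replace $\mathrm{d}\ln\frac{h^{p}}{h^{q}}$ by $\boldsymbol{\gamma}^{q}-\boldsymbol{\gamma}^{p}$, I continue the descent to $1$-forms on double overlaps and then to locally constant functions on triple overlaps. The identity $\boldsymbol{\gamma}^{p}\wedge\mathrm{d}\boldsymbol{\gamma}^{p}=0$, established from (\ref{e117}) and (\ref{e156}), is precisely what makes each $\boldsymbol{\gamma}^{p}\wedge\boldsymbol{\kappa}$ term drop at every stage, leaving a genuine \v{C}ech $3$-cocycle representing the Bott class.

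Then I would repeat the descent for $\boldsymbol{\Xi}$, whose local primitive is $\big(\ln\frac{\Vert\boldsymbol{J}_{2}^{p}\Vert}{\Vert\boldsymbol{J}_{1}^{p}\Vert}\big)\boldsymbol{\kappa}$. Since $\boldsymbol{\Gamma}_{1}-\boldsymbol{\Gamma}_{2}$ is globally defined and closed, comparing (\ref{e147}) across charts forces $\boldsymbol{\gamma}^{p}-\boldsymbol{\gamma}^{q}=\mathrm{d}\ln\frac{\Vert\boldsymbol{J}_{i}^{p}\Vert}{\Vert\boldsymbol{J}_{i}^{q}\Vert}$ for each $i$; combined with (\ref{e166}) this ties the norms $\Vert\boldsymbol{J}_{i}^{p}\Vert$ in overlapping charts to $h^{p}$ up to locally constant factors. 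Carrying these factors through the same descent shows that the \v{C}ech cocycle produced by $\boldsymbol{\Xi}$ is assembled from exactly the same overlap data that produces the Bott cocycle. Hence the two cocycles represent the same class up to a nonzero scalar, so $[\boldsymbol{\Xi}]=0$ if and only if the Bott class vanishes; with Proposition~\ref{proposition6} this is the statement of the theorem.

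The step I expect to be hardest is this final matching of the two \v{C}ech $3$-cocycles. It requires reconciling three families of local functions -- the compatibility rescaling $f^{p}$ of Proposition~\ref{proposition6}, the Chern primitive data $h^{p}$, and the norms $\Vert\boldsymbol{J}_{i}^{p}\Vert$ -- through the overlap identities (\ref{e147}) and (\ref{e166}), and then checking the cocycle condition on triple intersections without generating spurious terms. What makes this manageable is the pair of structural facts $\boldsymbol{\gamma}^{p}\wedge\mathrm{d}\boldsymbol{\gamma}^{p}=0$ and $\boldsymbol{\kappa}=\varphi\,\widehat{\boldsymbol{j}}_{1}\wedge\widehat{\boldsymbol{j}}_{2}$ from (\ref{e156}): together they annihilate every product $\boldsymbol{\gamma}^{p}\wedge\boldsymbol{\kappa}$ and every triple wedge of the $\widehat{\boldsymbol{j}}_{i}$, so that only the locally constant transition contributions -- which carry the class -- survive.
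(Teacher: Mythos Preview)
Your plan is a genuinely different route from the paper's. The paper does not run a \v{C}ech--de~Rham descent at all; instead it uses Proposition~\ref{proposition6} as a pivot. The argument there is: $[\boldsymbol{\Xi}]=0$ is equivalent (via Proposition~\ref{proposition6}) to the existence of a \emph{global} compatible pair, which in turn is equivalent to the local connection forms $\boldsymbol{\gamma}^{p}$ patching into a single global $\boldsymbol{\gamma}$. Once $\boldsymbol{\gamma}$ is global, the relation (\ref{e166}) forces $\ln h^{q}-\ln h^{p}$ to be locally constant on overlaps, and a one-line normalisation produces a global $h$ with $\mathrm{d}\ln h=\mathrm{d}\ln h^{p}$, so $\boldsymbol{h}_{1}\wedge\boldsymbol{c}_{1}=\mathrm{d}\big((\ln h)\boldsymbol{\kappa}\big)$ is globally exact. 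The converse is run the same way with $f=h$. The whole proof stays at the level of global versus local existence of $\boldsymbol{\gamma}$ and never needs the double complex.

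Your approach can be made to work, but the step you flag as hardest is the one that actually carries the content, and as written it is only a hope rather than an argument. After the first descent step the two families of local primitives you obtain are $\big(\ln\frac{\Vert\boldsymbol{J}_{2}^{p}\Vert}{\Vert\boldsymbol{J}_{1}^{p}\Vert}\big)\boldsymbol{\kappa}$ and $(\ln h^{p})\boldsymbol{\kappa}$; the overlap differences are, respectively, built from the locally constant discrepancies between $\Vert\boldsymbol{J}_{i}^{p}\Vert/\Vert\boldsymbol{J}_{i}^{q}\Vert$ and $h^{q}/h^{p}$, not from the same cocycle. Saying the resulting \v{C}ech classes agree ``up to a nonzero scalar'' is not something you can read off from (\ref{e147}) and (\ref{e166}) alone; you would need to carry both descents to the bottom and compare explicitly, which you have not done. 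What the paper's argument buys is that you never have to make this comparison: the intermediate object ``$\boldsymbol{\gamma}$ is global'' is equivalent to each side separately, so the two conditions are linked without ever identifying the two $3$-classes directly.
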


\begin{proof}
If the Bott class vanishes, then we have a globally def\/ined function~$h$ such that
\begin{gather*}
{\rm d} ( ( \ln h ) \boldsymbol{\kappa } ) =0.
\end{gather*}
Then, choosing $f=h$ leads to a compatible pair of global Poisson structures. Conversely, if there is a pair of globally def\/ined compatible Poisson structures, then $\boldsymbol{\gamma }$ becomes a global form, and by~(\ref{e166}) we have
\begin{gather*}
{\rm d}\ln \frac{h^{q}}{h^{p}}=0
\end{gather*}
on $U_{p}\cap U_{q}$. Therefore,
\begin{gather*}
\ln h^{q}-\ln h^{p}=c^{qp},
\end{gather*}
where $c^{qp}$ is a constant on $U_{p}\cap U_{q}$. Now, f\/ixing a point $x_{0}\in U_{p}\cap U_{q}$
\begin{gather*}
c^{qp}=\ln h^{q} ( x_{0} ) -\ln h^{p} ( x_{0} ) =\ln c^{q}-\ln c^{p},
\end{gather*}
we obtain
\begin{gather*}
\frac{h^{p}}{c^{p}}=\frac{h^{q}}{c^{q}}=h,
\end{gather*}
where $h$ is a globally def\/ined function, and
\begin{gather*}
{\rm d}\ln h={\rm d}\ln h^{p}.
\end{gather*}
Therefore,
\begin{gather*}
 [ \boldsymbol{h}_{1}\wedge \boldsymbol{c}_{1} ] = [ {\rm d} (( \ln h) \boldsymbol{\kappa })] =0 \label{e173}
\end{gather*}
and the theorem follows.
\end{proof}

\subsection*{Acknowledgements}
We are indebted to Professor Turgut \"{O}nder for his help during this work. We also thank to the anonymous referees for their comments and corrections.

\pdfbookmark[1]{References}{ref}
\LastPageEnding


\begin{thebibliography}{99}
\footnotesize\itemsep=0pt

\bibitem{Taro}
Asuke T., A remark on the {B}ott class, \href{https://doi.org/10.5802/afst.981}{\textit{Ann. Fac. Sci. Toulouse Math.}}
 \textbf{10} (2001), 5--21.

\bibitem{Bott}
Bott R., Lectures on characteristic classes and foliations, in Lectures on
 Algebraic and Dif\/ferential Topology ({S}econd {L}atin {A}merican {S}chool in
 {M}ath., {M}exico {C}ity, 1971), \href{https://doi.org/10.1007/BFb0058509}{\textit{Lecture Notes in Math.}}, Vol.~279,
 Springer, Berlin, 1972, 1--94.

\bibitem{Dibag}
Dibag I., Decomposition in the large of two-forms of constant rank,
 \href{https://doi.org/10.5802/aif.529}{\textit{Ann. Inst. Fourier (Grenoble)}} \textbf{24} (1974), 317--335.

\bibitem{Nutku}
G\"umral H., Nutku Y., Poisson structure of dynamical systems with three
 degrees of freedom, \href{https://doi.org/10.1063/1.530278}{\textit{J.~Math. Phys.}} \textbf{34} (1993), 5691--5723.

\bibitem{Haas}
Haas F., Goedert J., On the generalized {H}amiltonian structure of {$3$}{D}
 dynamical systems, \href{https://doi.org/10.1016/0375-9601(95)00113-H}{\textit{Phys. Lett.~A}} \textbf{199} (1995), 173--179,
 \href{https://arxiv.org/abs/math-ph/0211035}{math-ph/0211035}.

\bibitem{Bermejo}
Hern\'andez-Bermejo B., New solutions of the {J}acobi equations for
 three-dimensional {P}oisson structures, \href{https://doi.org/10.1063/1.1402174}{\textit{J.~Math. Phys.}} \textbf{42}
 (2001), 4984--4996.

\bibitem{Ince}
Ince E.L., Ordinary dif\/ferential equations, Dover Publications, New York, 1944.

\bibitem{Gengoux}
Laurent-Gengoux C., Pichereau A., Vanhaecke P., Poisson structures,
 \href{https://doi.org/10.1007/978-3-642-31090-4}{\textit{Grundlehren der Mathematischen Wissenschaften}}, Vol.~347, Springer,
 Heidelberg, 2013.

\bibitem{Magri}
Magri F., A simple model of the integrable {H}amiltonian equation,
 \href{https://doi.org/10.1063/1.523777}{\textit{J.~Math. Phys.}} \textbf{19} (1978), 1156--1162.

\bibitem{Olver}
Olver P.J., Canonical forms and integrability of bi-{H}amiltonian systems,
 \href{https://doi.org/10.1016/0375-9601(90)90775-J}{\textit{Phys. Lett.~A}} \textbf{148} (1990), 177--187.

\bibitem{Santo}
Santoprete M., On the relationship between two notions of compatibility for
 bi-{H}amiltonian systems, \href{https://doi.org/10.3842/SIGMA.2015.089}{\textit{SIGMA}} \textbf{11} (2015), 089, 11~pages,
 \href{https://arxiv.org/abs/1506.08675}{arXiv:1506.08675}.

\bibitem{Steifel}
Stiefel E., Richtungsfelder und {F}ernparallelismus in $n$-dimensionalen
 {M}annigfaltigkeiten, \href{https://doi.org/10.1007/BF01199559}{\textit{Comment. Math. Helv.}} \textbf{8} (1935), 305--353.

\bibitem{Wein}
Weinstein A., The local structure of {P}oisson manifolds,
 \href{https://doi.org/10.4310/jdg/1214437787}{\textit{J.~Differential Geom.}} \textbf{18} (1983), 523--557.

\end{thebibliography}
\end{document}